\newtheorem{theorem}{Theorem}[section]
\newtheorem{lemma}[theorem]{Lemma}
\newtheorem{definition}[theorem]{Definition}
\newtheorem{remark}[theorem]{Remark}
\newtheorem{question}[theorem]{Question}
\numberwithin{equation}{section}
\newcommand{\mN}{\mathcal{N}}
\newcommand{\mNt}{\widetilde{\mathcal{N}}(SG_{n,2})}
\newcommand{\mNh}{\overline{\mathcal{N}}(SG_{n,2})}
\newcommand{\alphaodd}{\alpha_{\mbox{\small{odd}}}}
\newcommand{\betaodd}{\beta_{\mbox{\small{odd}}}}
\newcommand{\gammaodd}{\gamma_{\mbox{\small{odd}}}}
\newcommand{\alphaeven}{\alpha_{\mbox{\small{even}}}}
\newcommand{\betaeven}{\beta_{\mbox{\small{even}}}}
\newcommand{\gammaeven}{\gamma_{\mbox{\small{even}}}}
\begin{document}

\title[Deformation Retracts of Neighborhood Complexes\ldots]{Deformation Retracts of Neighborhood Complexes of Stable Kneser Graphs}


\author{Benjamin Braun}
\address{Department of Mathematics, University of Kentucky, Lexington, Kentucky 40506}
\email{benjamin.braun@uky.edu}

\author{Matthew Zeckner}
\address{Department of Mathematics, University of Kentucky, Lexington, Kentucky 40506}
\email{matthew.zeckner@uky.edu}
%
\subjclass[2010]{Primary: 05E45  Secondary: 57M15, 05E18, 05C15}

\date{February 9, 2011}

\thanks{The first author was partially supported through NSF award DMS-0758321.  The second author was partially supported by a graduate fellowship through NSF award DMS-0758321.}

\keywords{Stable Kneser graph, neighborhood complex, discrete Morse theory, polytope}

\begin{abstract}
In 2003, A. Bj\"{o}rner and M. de Longueville proved that the neighborhood complex of the stable Kneser graph $SG_{n,k}$ is homotopy equivalent to a $k$-sphere.
Further, for $n=2$ they showed that the neighborhood complex deformation retracts to a subcomplex isomorphic to the associahedron.
They went on to ask whether or not, for all $n$ and $k$, the neighborhood complex of $SG_{n,k}$ contains as a deformation retract the boundary complex of a simplicial polytope.

Our purpose is to give a positive answer to this question in the case $k=2$.
We also find in this case that, after partially subdividing the neighborhood complex, the resulting complex deformation retracts onto a subcomplex arising as a polyhedral boundary sphere that is invariant under the action induced by the automorphism group of $SG_{n,2}$.
\end{abstract}

\maketitle


\section{Introduction and Main Result}

In 1978, L. Lov\'{a}sz proved in \cite{LovaszChromaticNumberHomotopy} M. Kneser's conjecture that if one partitions all the subsets of size $n$ of a $(2n + k)$-element set into $(k+1)$ classes, then one of the classes must contain two disjoint subsets.
Lov\'{a}sz proved this conjecture by modeling the problem as a graph coloring problem: see Section~\ref{Background} for definitions of the following objects.
For the Kneser graphs $KG_{n,k}$, Kneser's conjecture is equivalent to the statement that the chromatic number of $KG_{n,k}$ is equal to $k+2$.
Lov\'{a}sz's proof methods actually provided a general lower bound on the chromatic number of any graph $G$ as a function of the topological connectivity of an associated simplicial complex called the neighborhood complex of $G$.
Of particular interest in his proof was the critical role played by the Borsuk-Ulam theorem.
Later that year, A. Schrijver identified in \cite{Schrijvergraphs} a vertex-critical family of subgraphs of the Kneser graphs called the stable Kneser graphs $SG_{n,k}$, or Schrijver graphs, and determined that the chromatic number of $SG_{n,k}$ is equal to $k+2$.

In 2003, A. Bj\"{o}rner and M. de Longueville gave in \cite{BjornerDeLongueville} a new proof of Schrijver's result by applying Lov\'{a}sz's method to the stable Kneser graphs; in particular, they proved that the neighborhood complex of $SG_{n,k}$ is homotopy equivalent to a $k$-sphere.
In the final section of their paper, Bj\"{o}rner and De Longueville showed that the neighborhood complex of $SG_{2,k}$ contains the boundary complex of a $(k+1)$-dimensional associahedron as a deformation retract.
Their paper concluded with the following:

\begin{question}\label{BDQ}{\rm (Bj\"{o}rner and De Longueville, \cite{BjornerDeLongueville}) } For all $n$ and $k$, does the neighborhood complex of $SG_{n,k}$ contain as a deformation retract the boundary complex of a simplicial polytope?
\end{question}

Our main contribution in this paper is to provide a positive answer to Question~\ref{BDQ} in the case $k=2$.  Specifically, we show the following:

\begin{theorem}\label{mainthm}
For every $n\geq 1$, the neighborhood complex of $SG_{n,2}$ simplicially collapses onto a subcomplex arising as the boundary of a three-dimensional simplicial polytope.
\end{theorem}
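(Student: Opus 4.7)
The plan is to combine discrete Morse theory with the polytopality of simplicial 2-spheres. By the Bj\"orner--De Longueville result recalled earlier, $\mathcal{N}(SG_{n,2})$ has the homotopy type of $S^2$, so any simplicial 2-manifold subcomplex onto which it collapses is automatically a combinatorial triangulation of $S^2$. Steinitz's theorem then guarantees that every such triangulation is realized as the boundary complex of a $3$-dimensional simplicial polytope. It therefore suffices to (a) identify a subcomplex $K \subseteq \mathcal{N}(SG_{n,2})$ that is a simplicial 2-sphere, and (b) exhibit a sequence of elementary simplicial collapses taking $\mathcal{N}(SG_{n,2})$ down to $K$, which I would produce by way of an acyclic matching on the face poset and Forman's theorem.

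First I would develop an explicit combinatorial description of $\mathcal{N}(SG_{n,2})$. The vertices of $SG_{n,2}$ are the $2$-stable $n$-subsets of $[2n+2]$ (those with no two cyclically adjacent elements), and a collection of such vertices spans a face of the neighborhood complex exactly when they share a disjoint $2$-stable $n$-subset. I would enumerate facets in terms of pairs of disjoint stable sets and record the incidence data using combinatorial invariants such as the cyclic gap structure of a stable set; organizing this data carefully is essential for constructing the matching.

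Next I would specify the candidate target subcomplex $K$. The odd/even notation declared in the preamble strongly suggests that the relevant configurations split by a parity invariant, and I would expect $K$ to be built as the closure of a distinguished family of $2$-faces indexed by this parity data. With $K$ in hand, I would define an element matching on the face poset of $\mathcal{N}(SG_{n,2})$: fix a total order on the vertices of $SG_{n,2}$, and for each face $F$ not lying in $K$ pair $F$ with $F \bigtriangleup \{v(F)\}$, where $v(F)$ is the smallest vertex whose addition to or removal from $F$ yields a face of $\mathcal{N}(SG_{n,2})$ that is still outside $K$.

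The principal obstacle is verifying that this matching is acyclic while leaving every face of $K$ unmatched. I would handle acyclicity by arguing that along any hypothetical cycle the pivot vertex $v(F)$ would be forced to strictly decrease in the fixed total order, a contradiction; in practice this reduces to a case analysis on gap patterns in the underlying stable sets, organized by the parity bookkeeping encoded in the $\alpha$, $\beta$, $\gamma$, $\pi$, and $\omega$ notation. Once acyclicity is established and a local link computation confirms that every edge of $K$ lies in exactly two triangles of $K$, Forman's theorem yields the simplicial collapse $\mathcal{N}(SG_{n,2}) \searrow K$, the collapse forces $K$ to be a triangulation of $S^2$, and Steinitz's theorem supplies the $3$-dimensional simplicial polytope whose boundary complex is $K$.
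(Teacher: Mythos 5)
Your high-level outline is the right one and matches the paper's in spirit: use discrete Morse theory to collapse $\mathcal{N}(SG_{n,2})$ onto a two-dimensional subcomplex $K$, verify $K$ is a simplicial $2$-sphere, and invoke Steinitz's theorem to get a $3$-polytope. You also get a small bonus observation: since a simplicial triangulation of $S^2$ is automatically $3$-connected, invoking this fact lets you skip the explicit $3$-connectivity argument that occupies Subsection~\ref{polytopeproof} of the paper. But the proposal as written has a genuine gap at the one place where all the content lives: neither the subcomplex $K$ nor the acyclic matching producing the collapse is actually constructed.

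The matching rule you propose --- fix a total order on vertices and pair $F \notin K$ with $F \bigtriangleup \{v(F)\}$ for the ``smallest admissible pivot'' $v(F)$ --- is not a well-defined matching in general, because the rule is not an involution: if $F$ is sent to $G = F \cup \{v\}$, there is no guarantee that $G$'s pivot is again $v$, so the pairing need not be symmetric. Even when such a rule does define a matching (e.g.\ on a cone), one still must show acyclicity and identify the critical cells, and here that work is deferred to an unspecified ``case analysis on gap patterns.'' Moreover, $K$ itself is never pinned down, so there is nothing on which to run the promised link computation. By contrast, the paper's actual proof does the work your outline postpones: it identifies the facets as $\Sigma_\gamma$ indexed by loose and tight stable sets $\gamma$; builds explicit poset maps $\theta_\alpha$ (loose case) and $\varphi_\alpha$ (tight case) with small target chains; matches within preimages using a single fixed vertex ($a$, $v^1$, or $v^j$) so that each local matching is visibly an involution and acyclic; verifies these local maps agree on overlaps so they glue to a global poset map $\Phi$; and applies the Patchwork theorem. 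The critical complex $\Phi^{-1}(A)$ is then described concretely and realized as a planar triangulation built from concentric $(n+1)$-gons indexed by the parity statistic you correctly anticipated. To repair your proposal, you would need to replace the generic pivot rule with something with this kind of local cone structure, and to actually name $K$; once you reach that level of detail, you would essentially be reconstructing the paper's Sections~\ref{Matching} and~\ref{Sphere1}.
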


The subcomplex for $\mN(SG_{3,2})$ is shown in Figure~\ref{p}.

\begin{figure}[ht]
\label{p}
\begin{center}
\includegraphics[width=5in]{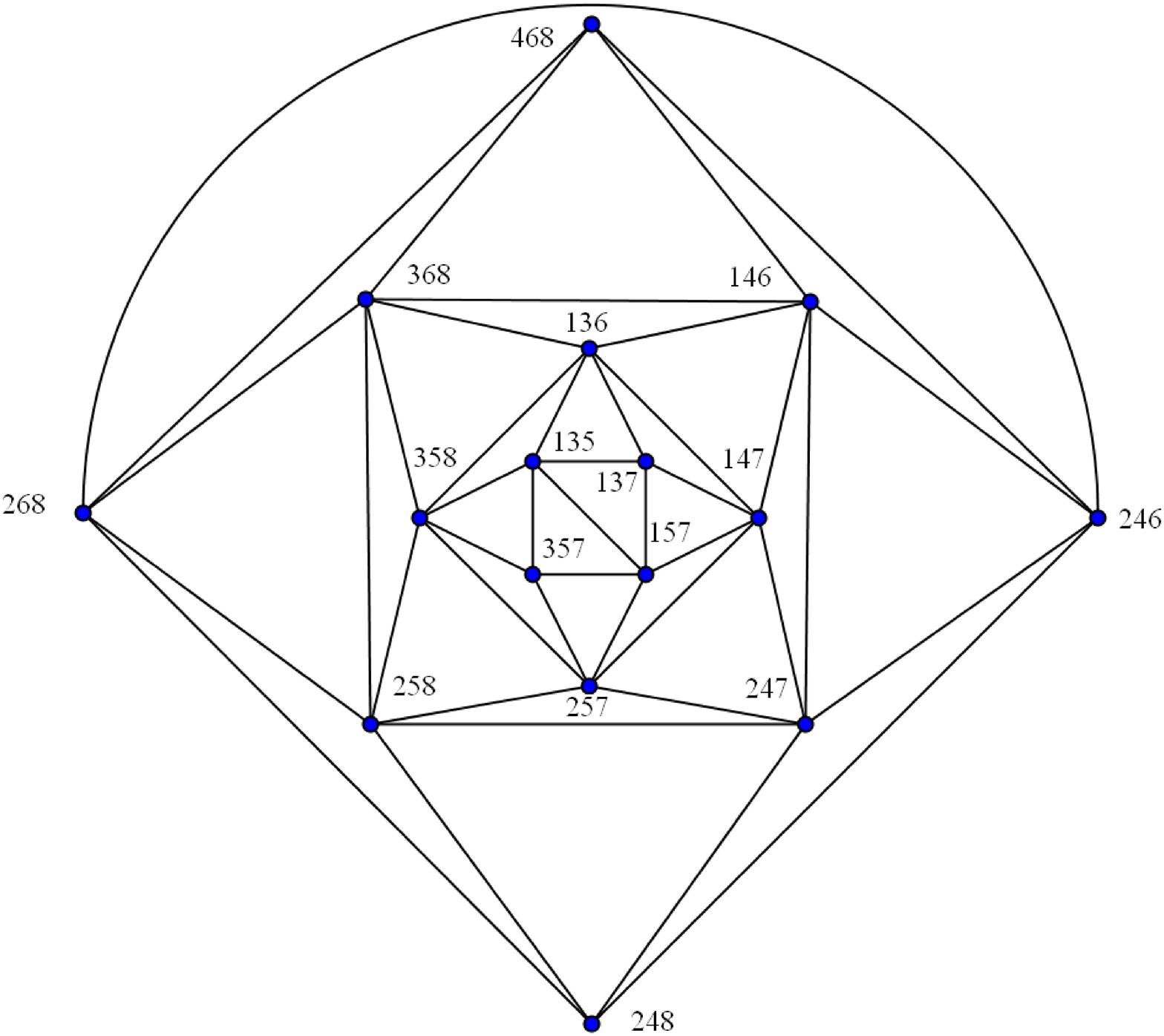}
\end{center}
\caption{}
\end{figure}

In \cite{BraunAutStableKneser}, the first author proved that for $k\geq 1$ and $n\geq 1$ the automorphism group of $SG_{n,k}$ is isomorphic to the dihedral group of order $2(2n+k)$.
It is natural to ask if there exist spherical subcomplexes of the neighborhood complex of $SG_{n,k}$ that are invariant under the induced action of this group.
While our spheres arising in Theorem~\ref{mainthm} are not invariant, we are able to show the following:

\begin{theorem}\label{dihedralthm}
For every $n\geq 1$, there exists a partial subdivision of the neighborhood complex of $SG_{n,2}$ that simplicially collapses onto a subcomplex invariant under the action induced by the automorphism group of $SG_{n,2}$ arising as the boundary of a three-dimensional simplicial polytope.
\end{theorem}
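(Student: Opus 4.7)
The plan is to bootstrap from Theorem~\ref{mainthm} and then engineer dihedral invariance through a carefully chosen subdivision. Let $\mathcal{S}\subset\mathcal{N}(SG_{n,2})$ denote the polytopal $2$-sphere onto which the neighborhood complex collapses in Theorem~\ref{mainthm}, and let $G=D_{2(2n+2)}$ act on $\mathcal{N}(SG_{n,2})$ through the action on $SG_{n,2}$ established in \cite{BraunAutStableKneser}. Since $\mathcal{S}$ is not preserved by $G$ but each translate $g\cdot\mathcal{S}$ is again a polytopal sphere supporting an analogous collapse, the first step is to analyze the union $\bigcup_{g\in G} g\cdot\mathcal{S}$ and identify from its combinatorics a $G$-invariant $2$-sphere $\mathcal{T}$. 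A natural candidate is suggested by the orbit of $\mathcal{S}$ under the cyclic rotation by one step, whose intersection with $\mathcal{S}$ pinpoints the simplices of $\mathcal{S}$ that obstruct invariance.

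Next, I would determine the minimal partial subdivision $\widetilde{\mathcal{N}}(SG_{n,2})$ of the neighborhood complex needed so that $\mathcal{T}$ appears as a subcomplex, performing only stellar subdivisions of faces along a complete $G$-orbit so that the subdivision itself is equivariant. The strategy for the collapse is to extend the acyclic matching supplied by Theorem~\ref{mainthm}: matched pairs that survive intact in $\widetilde{\mathcal{N}}(SG_{n,2})$ remain matched, subdivided faces get paired with cofaces through the new vertices, and one argues acyclicity by respecting the order used in the proof of Theorem~\ref{mainthm}. An invocation of discrete Morse theory then yields the required collapse of $\widetilde{\mathcal{N}}(SG_{n,2})$ onto $\mathcal{T}$. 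To conclude that $\mathcal{T}$ is the boundary of a simplicial $3$-polytope, I would verify the hypotheses of Steinitz's theorem: the $1$-skeleton is planar because $\mathcal{T}$ is a simplicial $2$-sphere, and $3$-connectedness of the edge graph can be checked combinatorially from the explicit description of $\mathcal{T}$, after which an equivariant embedding of the vertices on $S^{2}$ respecting the dihedral symmetry produces the geometric realization.

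The main obstacle I anticipate lies in producing the subdivision and the equivariant matching simultaneously. Because the dihedral action on $\mathcal{N}(SG_{n,2})$ is not free, one must ensure that the chosen matching does not pair a cell with its own $G$-translate and that the induced matching on every orbit closes up consistently. If stabilizer conflicts arise, a reasonable contingency is to pass to a fundamental domain for the $G$-action, construct the subdivision and the collapse there, and then symmetrize; the remaining work is then confined to the boundary of the fundamental domain, where only reflection-stabilizers act and the compatibility can be checked case by case. Verifying $3$-connectedness of the $1$-skeleton of $\mathcal{T}$ will likely also require attention, since the symmetrization can create low-degree separating sets whose absence must be argued via the explicit combinatorial description of $\mathcal{T}$.
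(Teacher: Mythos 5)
Your proposal correctly identifies the high-level program — find a $D_{2n+2}$-invariant $2$-sphere and subdivide $\mathcal{N}(SG_{n,2})$ so it appears as a subcomplex reached by a collapse — but it leaves the essential content unspecified, and in that sense has a genuine gap. The key observation in the paper is \emph{why} the sphere $\mathcal{S}$ from Theorem~\ref{mainthm} fails to be invariant: the two ``polar'' polygons $P_0$ (odd tight sets) and $P_n$ (even tight sets) are triangulated as fans from a distinguished vertex $v^1 = \{1,3,\ldots,2n-1\}$ (resp.\ $\{2,4,\ldots,2n\}$), which are not fixed by the dihedral action. Passing to the union $\bigcup_g g\cdot\mathcal{S}$ and hoping to ``identify from its combinatorics'' an invariant sphere glosses over exactly this analysis; one needs to recognize that the symmetric replacement for a fan triangulation of an $(n+1)$-gon is the \emph{barycentric} subdivision coned from the barycenter, and then subdivide the two polar faces $F_o$, $F_e$ (and, for the result to remain a simplicial complex, the adjacent $\{v^j, v^{j+1}, \eta_{\alpha}\}$ triangles) accordingly. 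Absent this concrete choice, the proposal does not actually produce a subdivision or a candidate $\mathcal{T}$.

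Two further points. First, your concern about producing an \emph{equivariant} acyclic matching, and the contingency of passing to a fundamental domain, are misplaced: the paper's matching is not $G$-equivariant and does not need to be. All that is required is that the subcomplex of critical cells happens to be $G$-invariant, which is checked after the fact by a direct computation showing that each $g\in D_{2n+2}$ permutes the facets of $M(SG_{n,2})$ (the loose-vertex triangle pairs, the triangles $T_1,\ldots,T_4$ around each tight vertex). Worrying about stabilizer conflicts in the matching adds difficulty without being necessary. Second, ``the $1$-skeleton is planar because $\mathcal{T}$ is a simplicial $2$-sphere'' inverts the logic of the paper's argument: planarity and $3$-connectedness are established via the explicit planar embedding built in Section~\ref{polytopeproof}, and the Steinitz argument for $M(SG_{n,2})$ is a modification of that — you cannot assume $\mathcal{T}$ is a sphere before constructing it.
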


In addition to its aesthetic attraction, there are two primary reasons we are interested in Question~\ref{BDQ}.
First, any polytopes found in response to Question~\ref{BDQ} will be common generalizations of simplices, associahedra, and $1$-spheres given as odd cycles, due to the following observations: for $SG_{1,k}=K_{k+2}$, the neighborhood complex is a simplex boundary; for $SG_{n,1}$, the neighborhood complex is an odd cycle, hence a one-dimensional sphere; for $SG_{2,k}$, the neighborhood complex deformation retracts to an associahedron.
A family of polytopes generalizing these objects would be interesting to identify.
Second, a broad extension of the neighborhood complex construction is the graph homomorphism complex $HOM(H,G)$ studied in \cite{BabsonKozlovComplexes,BabsonKozlovLovaszConjecture,DochtermannEngstromCellular,DochtermannSchultz,SchultzStableKneserNotTest,SchultzSpacesOfCircuits}.
The complex $HOM(K_2,G)$ is known to be homotopy equivalent to the neighborhood complex of $G$.
The homomorphism complex construction leads to interesting phenomena, yet at present the lower bounds on graph chromatic numbers obtained by these are no better than those provided by the neighborhood complex.
We believe it is appropriate to continue to focus attention on the neighborhood complex construction along with the $HOM$ construction.

The rest of this paper is as follows.
In Section~\ref{Background}, we introduce the necessary background and notation regarding neighborhood complexes and stable Kneser graphs as well as discrete Morse theory, the primary tool in our proofs.
In Sections~\ref{Matching} and~\ref{Sphere1}, we provide a proof of Theorem~\ref{mainthm}.
In Section~\ref{Sphere2} we provide a proof of Theorem~\ref{dihedralthm}.


\section{Definitions and Background}\label{Background}

Let $[n]:=\{1,2,\ldots,n\}$.
The material in this section is adapted from the texts \cite{JonssonBook} and \cite{KozlovBook}, where more details may be found.


\subsection{Neighborhood Complexes and Stable Kneser Graphs}

The following definition is due to Lov\'{a}sz.

\begin{definition}
Given a graph $G=(V,E)$, the \emph{neighborhood complex of $G$} is the simplicial complex $\mN(G)$ with vertex set $V$ and faces given by subsets of $V$ sharing a common neighbor in $G$, i.e. $\mN(G) := \{ F\subset V: \exists v\in V \textrm{ s.t. } \forall u\in F, \{u,v\}\in E\}$.
\end{definition}

The graphs we are interested in are the following.

\begin{definition}\label{KG}
For $n\ge 1$ and $k\ge 0$ the \emph{Kneser graph}, denoted $KG_{n, k}$, is the graph whose vertices are the subsets of $[2n +k]$ of size $n$.
We connect two such vertices with an edge when they are disjoint as sets.

We call an $n$-set $\alpha$ of $[2n+k]$ \emph{stable} if $\alpha$ does not contain the subset $\{1, 2n+k\}$ or any of the subsets $\{i, i+1\}$ for $i=1,\ldots , 2n+k-1$.
The \emph{stable Kneser graph}, denoted $SG_{n, k}$, is the induced subgraph of $KG_{n, k}$ whose vertices are the stable subsets of $[2n+k]$.
\end{definition}

Our focus in this paper is on the case $k=2$; we will assume through the rest of the paper that this holds.
In order to handle different stable $n$-sets, we distinguish between them as follows, with all addition on elements being modulo $2n + 2$.

\begin{definition}
We call a stable $n$-set $\alpha$ \emph{tight} if $\alpha = \{i, i + 2, i + 4, \ldots, i + 2(n-1) \}$ for some $i \in [2n + 2]$.
Otherwise, we call $\alpha$ a \emph{loose} stable $n$-set.

For $\alpha=\{\alpha_1, \ldots, \alpha_n\}$ and $\beta$ stable $n$-sets, we call $\alpha$ and $\beta$ \emph{immediate neighbors} if $\alpha \oplus 1 = \beta$ or $\alpha \ominus 1 = \beta$, where $\alpha \oplus j:= \{\alpha_1 + j, \ldots, \alpha_n + j\}$ and $\alpha \ominus j$ is defined similarly.

We call $\alpha$ and $\beta$ \emph{outer neighbors} if there is an ordering of the elements of $\alpha$ such that $\beta =(\alpha_1 + 1, \alpha_2 + 1, \ldots, \alpha_{i-1} + 1 , \alpha_i + 2, \alpha_{i+1} + 1, \ldots, \alpha_n + 1)$ and $\alpha$ and $\beta$ are neighbors in $SG_{n, 2}$.
\end{definition}

The following remarks provide some insight into the structure of these graphs; further discussion, including proofs of these remarks, can be found in \cite{BraunIndComplexKneser,BraunAutStableKneser}.
\begin{itemize}
\item
A cycle is formed in $SG_{n,2}$ with vertices a stable $n$-set $\alpha$ and the stable $n$-sets $\alpha\oplus1$, $\alpha \oplus 2$, etc, with the edges $\{\alpha\oplus i,\alpha\oplus (i+1)\}$.
Thus, $\alpha$ and $\beta$ are immediate neighbors if they are neighbors on such a cycle in $SG_{n, 2}$.

\item
Stable $n$-sets $\alpha$ and $\beta$ are outer neighbors in $SG_{n, 2}$ if they
are neighbors and lie on two different cycles created via the immediate neighbor process.

\item
A loose stable $n$-set has degree $4$ in $SG_{n,2}$.  Two of its neighbors are immediate neighbors while the other two are outer neighbors.

\item
The tight stable $n$-sets correspond to vertices that together induce a complete bipartite subgraph in $SG_{n,2}$.

\end{itemize}


\subsection{Discrete Morse Theory}

We now introduce some tools from discrete Morse theory.
Discrete Morse theory was first developed by R. Forman in \cite{FormanMorseTheory} and has since become a powerful tool for topological combinatorialists.
The main idea of the theory is to systematically pair off faces within a simplicial complex in such a way that we obtain a collapsing order for the complex, yielding a homotopy equivalent cell complex.

\begin{definition}

A \textit{partial matching} in a poset $P$ is a partial matching in the underlying graph of the Hasse diagram
of $P$, i.e., it is a subset $M\subseteq P \times P$ such that
\begin{itemize}
\item
$(a,b)\in M$ implies $b \succ a;$ i.e. $a<b$ and no $c$ satisifies $a<c<b$.
\item
each $a\in P$ belongs to at most one element in $M$.
\end{itemize}
When $(a,b) \in M$, we write $a=d(b)$ and $b=u(a)$.
\item
A partial matching on $P$ is called \emph{acyclic} if there does not exist a cycle
\[
a_1 \prec u(a_1) \succ a_2 \prec u(a_2) \succ \cdots \prec u(a_m) \succ a_1
\]
with $m\ge 2$ and all $a_i\in P$ being distinct.

\end{definition}

Given an acyclic partial matching $M$ on a poset $P$, we call an element $c$ \emph{critical} if it is unmatched.
If every element is matched by $M$, we say $M$ is \emph{perfect}.
We are now able to state the main theorem of discrete Morse theory.

\begin{theorem}
Let $\Delta$ be a simplicial complex and let $M$ be an acyclic matching on the face poset of $\Delta$.
Let $c_i$ denote the number of critical $i$-dimensional cells of $\Delta$.
The space $\Delta$ is homotopy equivalent to a cell complex $\Delta_c$ with $c_i$ cells of dimension $i$
for each $i\ge 0$, plus a single $0$-dimensional cell in the case where the emptyset is paired in the matching.
\end{theorem}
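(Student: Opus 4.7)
The plan is to build a filtration of $\Delta$ in which matched pairs correspond to elementary expansions (homotopy equivalences) and critical cells correspond to cell attachments, realizing $\Delta$ up to homotopy as the CW complex $\Delta_c$ with one cell per critical simplex. The key combinatorial tool is a linear extension of a ``cluster DAG'' extracted from the acyclic matching.

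First, I would reformulate acyclicity of $M$ graph-theoretically. Let $D_M$ denote the Hasse diagram of the face poset $F(\Delta)$ with every covering relation oriented upward ($a \to b$ for $a \prec b$). Group $F(\Delta)$ into \emph{clusters}: each matched pair $(a,b) \in M$ is a two-element cluster, and every critical element forms a singleton cluster. The induced cluster graph has an edge $C \to C'$ whenever some $x \in C$ is a face of some $y \in C'$. I would show that a directed cycle in the cluster graph corresponds precisely to an alternating sequence $a_1 \prec u(a_1) \succ a_2 \prec u(a_2) \succ \cdots \succ a_1$ of the type forbidden by acyclicity of $M$, since one must enter each matched cluster at its lower element $a_i$ and exit at its upper element $u(a_i)$, with the transition between consecutive clusters accounted for by an unmatched covering $a_{i+1} \prec u(a_i)$. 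Hence acyclicity of $M$ is equivalent to the cluster graph being a DAG, which admits a topological sort $C_1, C_2, \ldots, C_N$.

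Next, I would construct the filtration $\emptyset = \Delta_0 \subset \Delta_1 \subset \cdots \subset \Delta_N = \Delta$ by setting $\Delta_i = \Delta_{i-1} \cup C_i$. If $C_i = \{c\}$ is a critical singleton, then every proper face of $c$ lies in an earlier cluster, so $\partial c \subseteq \Delta_{i-1}$ and the inclusion $\Delta_{i-1} \subset \Delta_i$ is the attachment of a single $\dim(c)$-cell. If $C_i = \{a, b\}$ with $a \prec b$ is a matched pair, then every coface of $a$ other than $b$ lies in a later cluster (since any such coface $c$ yields an edge $a \to c$ in $D_M$), so $a$ is a free face of $b$ in $\Delta_i$ and the inclusion $\Delta_{i-1} \subset \Delta_i$ is an elementary expansion, hence a homotopy equivalence. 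Composing these inclusions, the homotopy type of $\Delta$ is determined by the cell attachments alone, so $\Delta$ is homotopy equivalent to a CW complex $\Delta_c$ with exactly $c_i$ cells of dimension $i$. The parenthetical concerning the emptyset corresponds to the case $(\emptyset, v) \in M$: this pair initializes the filtration as an elementary expansion from the empty complex to a point, requiring an auxiliary $0$-cell to serve as a basepoint of $\Delta_c$, since $\emptyset \in F(\Delta)$ has no geometric realization.

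The main obstacle is the equivalence between acyclicity of $M$ and DAG-ness of the cluster graph, which requires carefully translating the alternating up-down condition of a matching cycle into a directed cycle among clusters and back. Once this bijection is pinned down, the free-face verification in the matched-pair step and the boundary-containment verification in the critical-cell step follow formally from the topological sort, and the resulting chain of homotopy equivalences and cell attachments yields the CW model $\Delta_c$ with the prescribed cell counts.
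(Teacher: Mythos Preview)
The paper does not prove this theorem: it is stated as background (the ``main theorem of discrete Morse theory'' due to Forman) and used without proof, so there is no argument in the paper to compare against.

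Your sketch is essentially the standard proof, and the overall architecture is sound: the acyclic matching gives a linear order on clusters, matched pairs become elementary expansions, and critical cells become genuine cell attachments. One point needs correcting. You claim that in a directed cycle of the cluster graph ``one must enter each matched cluster at its lower element $a_i$ and exit at its upper element $u(a_i)$.'' The dimension count forces the opposite: each inter-cluster edge raises dimension by $1$, so over a $k$-cycle the intra-cluster changes must sum to $-k$; since each is at least $-1$, every cluster in the cycle is a matched pair entered at its \emph{upper} element $u(a_i)$ and exited at its \emph{lower} element $a_i$. After this swap (and a reindexing) you do recover the forbidden alternating sequence $a_1 \prec u(a_1) \succ a_2 \prec \cdots$, so the conclusion stands, but the sentence as written is backwards.

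A second place that would benefit from one more line of justification is the passage from the filtration to the CW model $\Delta_c$. Saying ``the homotopy type of $\Delta$ is determined by the cell attachments alone'' hides the inductive use of the fact that if $\Delta_{i-1} \simeq X$ and $\Delta_i = \Delta_{i-1} \cup_\phi D^d$, then $\Delta_i \simeq X \cup_{\phi'} D^d$ for a suitable attaching map $\phi'$; this is standard (homotopy invariance of pushouts along cofibrations), but it is the step that actually produces $\Delta_c$.
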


\begin{remark}
If the critical cells of an acyclic matching on $\Delta$ form a subcomplex $\Gamma$ of $\Delta$, then $\Delta$ simplicially collapses to $\Gamma$, implying that $\Gamma$ is a deformation retract of $\Delta$.
\end{remark}

It is often useful to create acyclic partial matchings on several different sections of the face poset of a simplicial complex and then combine
them to form a larger acyclic partial matching on the entire poset.
This process is detailed in the following theorem known as the \textit{Cluster lemma} in \cite{JonssonBook} and the \textit{Patchwork theorem} in \cite{KozlovBook}.

\begin{theorem}\label{patchwork}
Assume that $\varphi : P \rightarrow Q$ is an order-preserving map.
For any collection of acyclic matchings on the subposets $\varphi^{-1}(q)$ for $q\in Q$, the union of these matchings is itself an acyclic matching on $P$.
\end{theorem}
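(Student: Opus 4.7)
My plan is to establish the two defining properties of an acyclic matching — the pairing property and acyclicity — separately, exploiting the order-preservation of $\varphi$ in both arguments.

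For the pairing property, I would first observe that each element of $P$ lies in exactly one fiber $\varphi^{-1}(q)$, so it can be matched by at most one of the given $M_q$, and the union $M = \bigcup_{q \in Q} M_q$ is therefore a well-defined partial matching on the set $P$. The more delicate point is that a pair $(a,b) \in M_q$ — which is a covering relation in the subposet $\varphi^{-1}(q)$ — must also be a covering relation in the ambient poset $P$. I would verify this by the following observation: if some $c \in P$ satisfied $a < c < b$, then order-preservation would force $q = \varphi(a) \le \varphi(c) \le \varphi(b) = q$, putting $c$ back into the fiber and contradicting that $a \prec b$ is a cover within $\varphi^{-1}(q)$.

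For acyclicity, I would apply $\varphi$ to a hypothetical cycle
\[
a_1 \prec u(a_1) \succ a_2 \prec u(a_2) \succ \cdots \prec u(a_m) \succ a_1.
\]
Each matched pair lies in a single fiber, giving $\varphi(a_i) = \varphi(u(a_i))$; each down-step $u(a_i) \succ a_{i+1}$ gives the weaker $\varphi(u(a_i)) \ge \varphi(a_{i+1})$. Chaining these relations around the cycle collapses them to equalities, so all $\varphi(a_i)$ and $\varphi(u(a_i))$ coincide with a single element $q \in Q$. The entire cycle then lives inside $\varphi^{-1}(q)$ and violates the assumed acyclicity of $M_q$.

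The only conceptually loaded use of the order-preservation hypothesis is in the covering-relation check of the first step, and I expect that to be the main subtlety; the cycle-killing argument is then essentially automatic transport of combinatorial structure along $\varphi$.
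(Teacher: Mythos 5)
Your proof is correct; note, however, that the paper does not actually prove this statement but cites it as the Cluster Lemma of \cite{JonssonBook} and the Patchwork Theorem of \cite{KozlovBook}, so there is no in-paper argument to compare against. Your two-step argument — using order-preservation to show that a fiber cover is an ambient cover, and applying $\varphi$ to collapse a hypothetical cycle into a single fiber — is exactly the standard proof found in those references, and both steps are executed without gaps.
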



\section{Construction of the acyclic matching}\label{Matching}

In this section we use discrete Morse theory to describe a simplicial collapsing of $\mN(SG_{n,2})$.
Section~\ref{Sphere1} contains an analysis of the complex of critical cells of our discrete Morse matching.
Our approach will be to produce poset maps from subposets of the face poset of $\mN(SG_{n,2})$ to various target posets, construct acyclic matchings on inverse images of these poset maps, and apply Theorem~\ref{patchwork} to obtain an acyclic matching on the entire face poset.
In our construction of these poset maps, we will consider facets of $\mN(SG_{n,2})$, which by definition arise in the following way.

\begin{definition}
For $\gamma$ a vertex of $SG_{n,2}$, let $\Sigma_{\gamma}$ be the facet in $\mN(SG_{n,2})$ formed by the neighbors of $\gamma$.
\end{definition}

A key role in our simplicial collapsing is played by the two simplices formed by the collections of all vertices of $SG_{n,2}$ of the form $\{\alpha_1,\ldots,\alpha_n\}$, where in each simplex the $\alpha_i$ have all even or all odd entries, respectively.
These all even and all odd simplices may be viewed as North and South poles for the complex.
As these pole simplices are not two-dimensional, we must collapse them to smaller dimension.
The facets $\Sigma_{\gamma}$ where $\gamma$ is loose then collapse to pairs of triangles that interpolate between these two poles, forming our sphere.


\subsection{Collapsing in facets of loose stable $n$-sets}

For any loose stable $n$-set $\alpha$, $\Sigma_{\alpha}$ is a $3$-dimensional simplex in $\mN(SG_{n,2})$ formed by the outer and immediate neighbors of $\alpha$.
A routine check reveals that the edge consisting of $\alpha$'s outer neighbors is free in $\mN(SG_{n,2})$.
Thus, for each such facet we may perform the following collapse.

Label the vertices of $\Sigma_\alpha$ by $a,b,c,d$ with the outer
neighbors of $\alpha$ labeled $b$ and $d$. Let $P_\alpha$ be the face poset of $\Sigma_{\alpha}$ and
$Q_\alpha:=A<B_\alpha$ a chain of length $2$.  Let $\theta_\alpha : P_\alpha \rightarrow Q_\alpha$ be defined by
\[
\theta_\alpha (x) = \left \{ \begin{array}{ll}
A& \mbox{if } \{ b, d\} \nsubseteq x \\
B_\alpha & \mbox{if } \{ b, d\} \subseteq x
\end{array}
\right.
\]
It is immediate that $\theta_\alpha$ is a poset map and that $\theta^{-1}(B_\alpha)$ yields a perfect acyclic matching when we match an element $x$ in the inverse not containing $a$ with $x \cup \{a\}$.
This matching collapses each facet given by a loose stable $n$-set to two triangles that share a common edge.


\subsection{Collapsing in facets of tight stable $n$-sets}\label{collapse}

Consider a tight stable $n$-set $\alpha$ in $[2n+2]$, and observe that all elements of $\alpha$ are of the same parity.
\begin{lemma}
$\alpha$ has a unique outer neighbor.
\end{lemma}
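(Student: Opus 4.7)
The plan is to enumerate all candidate outer neighbors of $\alpha$ and show that exactly one is a stable $n$-set. Write $\alpha = \{i, i+2, \ldots, i + 2(n-1)\}$ with all arithmetic mod $2n+2$. By the definition of outer neighbor, any $\beta$ outer-adjacent to $\alpha$ is produced by singling out one element $\alpha_* = i + 2(j-1) \in \alpha$, replacing it with $\alpha_* + 2$, and shifting each remaining element by $+1$. Hence there are at most $n$ candidates $\beta_j$, one per choice of $j \in [n]$, and $\beta$ is determined by $j$ alone.

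Direct substitution gives
\[
\beta_j = \{i+1, i+3, \ldots, i+2j-3,\; i+2j,\; i+2j+1, \ldots, i+2n-1\},
\]
and for each $j < n$ this set contains the consecutive pair $\{i+2j, i+2j+1\}$, so $\beta_j$ fails to be stable. The only surviving candidate is $\beta_n = \{i+1, i+3, \ldots, i+2n-3, i+2n\}$. To complete the proof I would verify $\beta_n$ is a stable $n$-set disjoint from $\alpha$: within $\beta_n$, the odd-shifted elements are separated by even gaps, the gap $(i+2n) - (i+2n-3) = 3$ is safe, and the cyclic gap from $i+2n$ back to $i+1$ also equals $3 \pmod{2n+2}$; disjointness follows because $i+2n \equiv i - 2 \pmod{2n+2}$ is the unique element of the parity class of $i$ missing from $\alpha$, while the remaining elements of $\beta_n$ have opposite parity.

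The argument is a direct case analysis, and I do not anticipate a significant obstacle. The only delicate point is recognizing which candidate $\beta_j$ escapes the consecutive-pair obstruction, and this reduces to a careful reading of the cyclic arithmetic at the boundary $j = n$, where the otherwise-problematic term $i+2j+1$ simply falls outside the range.
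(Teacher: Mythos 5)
Your proof is correct, and it takes a genuinely different route from the paper. The paper's proof is a parity-and-counting argument: it observes that $[2n+2]\setminus\alpha$ consists of $n+1$ elements of the parity opposite to $\alpha$ and a single element $p$ of $\alpha$'s parity, notes that any outer neighbor must contain $p$ (since the defining formula $\alpha\mapsto(\alpha_1+1,\ldots,\alpha_i+2,\ldots,\alpha_n+1)$ produces exactly one element of $\alpha$'s parity) but cannot contain $p\pm 1$, and then counts that only $n-1$ viable elements remain in the complement, forcing the outer neighbor to contain all of them. Your proof instead explicitly lists the $n$ candidate sets $\beta_j$, one for each choice of which element is bumped by $2$, and knocks out all but $\beta_n$ by exhibiting the consecutive pair $\{i+2j,\,i+2j+1\}$ that violates stability; you then verify directly that $\beta_n$ is stable and disjoint from $\alpha$. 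The paper's argument is shorter and avoids case analysis, but implicitly only establishes uniqueness; your enumeration is more computational yet has the virtue of also exhibiting the outer neighbor concretely and confirming existence. One small point worth flagging: the definition in the paper is phrased as a symmetric relation, so in principle one should also rule out candidates obtained by shifting $\alpha$'s elements by $-1$ with one by $-2$; a quick computation shows the only stable candidate from that direction equals your $\beta_n$ (since $i+2n\equiv i-2\pmod{2n+2}$), so nothing new arises, but it would be cleaner to say so explicitly.
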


\begin{proof}
Observe that $[2n+2] \setminus \alpha$ consists of the $n+1$ elements of the opposite parity of the elements of $\alpha$ and the one remaining element of the same parity as the elements of $\alpha$.
An outer neighbor of $\alpha$ must contain the one element of the same parity as the elements of $\alpha$, which we denote $p$.
As the outer neighbor is a stable $n$-set, it cannot contain $p\pm 1$.
Since there are only  $n-1$ viable elements left in $[2n+2] \setminus \alpha$, an outer neighbor of $\alpha$ must contain them all.
Hence, $\alpha$ has a unique outer neighbor.
\end{proof}

To simplify our presentation we introduce additional notation.
Lexicographically assign the neighbors of $\alpha$ the labels $ v^1, v^2, \ldots, v^{n+1}$ and $\eta_{\alpha}$
where the $v^i$'s are all tight and of the opposite parity of $\alpha$.
The remaining vertex, $\eta_\alpha$, denotes $\alpha$'s unique outer neighbor.

Let $\Sigma_{\alpha}$ denote the $(n+1)$-simplex formed by the neighbors of $\alpha$ and let $P_\alpha$ denote the face poset of $\Sigma_\alpha$.
Given $\alpha$ and its unique outer neighbor $\eta_\alpha$, let $p$ denote the element in the outer neighbor $\eta_\alpha$ of identical parity to the elements of $\alpha$.
For some $j$, we obtain $v^j$ and $v^{j+1}$ from $\eta_\alpha$ by replacing $p$ with $p-1$ or $p+1$, respectively.

\begin{lemma}
$\Sigma_\alpha$ collapses to the simplicial complex $N_\alpha$ where $N_\alpha$ consists of the following facets and their subsets:
\[
\{v^1,v^2,v^3\} , \{v^1,v^3,v^4\}, \{v^1,v^4,v^5 \}, \ldots , \{v^1, v^n, v^{n+1} \}, \{v^j, v^{j+1}, \eta_\alpha \}
\]
where if $j= n + 1$ then the last set listed above is replaced by $\{ v^1 , v^{n+1} , \eta_\alpha\}$.
In other words, $\Sigma_\alpha$ collapses to a triangulated $(n+1)$-gon where all diagonals in the triangulation emanate from the vertex labeled $v^1$ and the triangle $\{v^j, v^{j+1}, \eta_\alpha \}$ is attached to the $(n+1)$-gon.
\end{lemma}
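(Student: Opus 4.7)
The plan is to exhibit an acyclic partial matching on the face poset $P_\alpha$ whose set of non-empty critical cells coincides with $N_\alpha$; the remark immediately following the discrete Morse theorem then delivers the claimed simplicial collapse. I will build this matching in two pieces and combine them with the Patchwork Theorem~\ref{patchwork}, using the order-preserving map $\varphi : P_\alpha \to \{0 < 1\}$ defined by $\varphi(F) = 1$ if $\eta_\alpha \in F$ and $\varphi(F) = 0$ otherwise. The fiber $\varphi^{-1}(0)$ is the face poset of the $n$-simplex on $V := \{v^1, \ldots, v^{n+1}\}$, and $\varphi^{-1}(1)$ is the set of faces $\{\eta_\alpha\} \cup S$ for $S \subseteq V$.

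On $\varphi^{-1}(0)$, I pair each face $F \notin N_\alpha$ with $F \triangle \{v^1\}$. A short case check on $|F|$ and on whether $v^1 \in F$ confirms that $F \triangle \{v^1\}$ is also outside $N_\alpha$, using the fact that the fan triangles are exactly the 3-subsets of $V$ consisting of $v^1$ together with two consecutive indices. Hence the rule defines a valid partial matching whose unmatched cells are exactly the vertices, fan edges, and fan triangles of $N_\alpha$. Acyclicity follows because every matched pair has its bottom missing $v^1$ and its top containing $v^1$: an attempted cycle $a_1 \prec a_1 \cup \{v^1\} \succ a_2$ forces $a_2$ to be a codimension-one subface of $a_1 \cup \{v^1\}$ also missing $v^1$, which can only be $a_1$ itself, violating distinctness.

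On $\varphi^{-1}(1)$, let $\{u_1, u_2\}$ denote the polygon edge of the extra triangle of $N_\alpha$ (that is, $\{u_1, u_2\} = \{v^j, v^{j+1}\}$ in the generic case and $\{v^1, v^{n+1}\}$ in the wrap-around), and pair $\{\eta_\alpha\} \cup S$ with $\{\eta_\alpha\} \cup (S \triangle \{u_1\})$ whenever $S \not\subseteq \{u_1, u_2\}$. Because toggling $u_1$ leaves $S \cap (V \setminus \{u_1, u_2\})$ unchanged, both partners in every pair are non-critical together, so the unmatched cells are exactly the four subsets of $\{u_1, u_2, \eta_\alpha\}$ containing $\eta_\alpha$, i.e.~the faces of the extra triangle. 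Acyclicity follows by the same top/bottom argument as above with $u_1$ as the pivot.

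Combining these matchings via the Patchwork Theorem~\ref{patchwork} produces an acyclic matching on $P_\alpha$ whose non-empty critical cells are the $n+2$ vertices, $2n+1$ edges, and $n$ triangles of $N_\alpha$. Since these critical cells form the subcomplex $N_\alpha$, the remark following the discrete Morse theorem yields the simplicial collapse of $\Sigma_\alpha$ onto $N_\alpha$. I expect the main technical obstacle to be the case analysis for the fiber $\varphi^{-1}(0)$, where one must verify for each type of non-$N_\alpha$ face (non-fan edges on $\{v^2,\ldots,v^{n+1}\}$, non-fan triangles with or without $v^1$, and all faces of dimension at least three) that toggling $v^1$ produces another face outside $N_\alpha$ of one dimension higher or lower; the corresponding check on $\varphi^{-1}(1)$ is essentially automatic because the pivot $u_1$ lies in the active edge of the extra triangle.
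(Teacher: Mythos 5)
Your proposed matching is the same one the paper uses, just bookkept differently: you pair by toggling $v^1$ on faces omitting $\eta_\alpha$ and by toggling $u_1=v^j$ on faces containing $\eta_\alpha$, while the paper uses a three-element chain $A<B<C_\alpha$ so that $\varphi_\alpha^{-1}(A)=N_\alpha$ is left unmatched and the fibers over $B$ and $C_\alpha$ carry perfect matchings via $v^1$ and $v^j$, respectively. Both routes produce identical pairings and the same critical subcomplex $N_\alpha$, so this is essentially the paper's proof.
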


The idea behind our matching in the following proof is that the intersection of any two facets corresponding to tight sets is the simplex $\{v^1,v^2,v^3,\ldots,v^{n+1}\}$.
To collapse $\Sigma_\alpha$ to $N_\alpha$, we will pair unwanted faces contained in $\{v^1,v^2,v^3,\ldots,v^{n+1}\}$ with $v^1$, and pair unwanted faces containing $\eta_\alpha$ with $v^j$.
Separating these matchings allows us to patch the relevant poset maps together in a coherent way in the following subsection.

\begin{proof}
Fix a tight stable $n$-set $\alpha$, with outer neighbor $\eta_\alpha$ and associated $v^j$.
Let $Q_\alpha:=A<B<C_\alpha$ be a three element chain.
Consider the map $\varphi_\alpha : P_\alpha \rightarrow Q_\alpha$ defined by
\[
\varphi_\alpha (x) = \left \{ \begin{array}{ll}
A & \mbox{if } |x|=1, \mbox{ } x= \{v^r, v^s\}, x= \{v^1, v^r, v^s\}, \mbox{ or } x\subseteq\{v^j,v^{j+1},\eta_\alpha\}\\
B & \mbox{for all other } x \mbox{ such that } \eta_\alpha \notin x \\
C_\alpha & \mbox{otherwise}     \\
\end{array}
\right.
\]
where either
$r =1$ and $s\in [n+1]\setminus \{1\}$ or $s=r+1$ for $r\in [n]\setminus \{1\}$.
Observe that $\varphi^{-1}_\alpha(A)$ is exactly the complex $N_\alpha$ defined above.

We now construct acyclic matchings on the posets $\varphi_\alpha^{-1}(B)$ and $\varphi_\alpha^{-1}(C_\alpha)$.
We claim that matching each $x\in \varphi_{\alpha}^{-1}(B)$ not containing $v^1$ with $x \cup \{v^1\}$ yields a perfect acyclic matching.
One first needs to check that no element is paired with an element of $\varphi_{\alpha}^{-1}(A)$ or $\varphi_{\alpha}^{-1}(C_\alpha)$, which is clear from the definitions.
That every face is matched is similarly clear.
To verify acyclicity, suppose a cycle exists, say $x_1 \prec u(x_1) \succ x_2 \prec u(x_2) \succ \cdots \prec u(x_m) \succ x_1$, for $m$ minimal.
Then, both $u(x_1)$ and $u(x_m)$ contain $x_1$ (as sets).
However, our matching dictates that we match $x_1$ and $u(x_1)$ if and only if they are in $\varphi_\alpha^{-1} (B)$ and
$u(x_1) = x_1 \cup \{v^1\}$.
If $v^1\in u(x_m)$, then $u(x_m) = u(x_1)$ implying $x_m = x_1$, a contradiction.
Otherwise, $v^1\notin u(x_m)$ implies $u(x_m)$ is also matched with $u(x_m) \cup \{v^1\}$, a contradiction.

We claim that matching each $x\in \varphi_{\alpha}^{-1}(C_\alpha)$ such that $j\notin x$ with $x\cup \{j\}$ yields a perfect acyclic matching in $\varphi_{\alpha}^{-1}(C_\alpha)$.
It is clear from the definitions that no element is paired with something outside $\varphi_{\alpha}^{-1}(C_\alpha)$, keeping in mind the observation that the pairs $(\eta_\alpha,\{v^j,\eta_\alpha\})$ and
$(\{v^{j+1},\eta_\alpha\},\{v^j,v^{j+1},\eta_\alpha\})$ are not included in this preimage; they are included in the preimage $\varphi_\alpha^{-1}(A)$.
Verifying that this is a perfect acyclic matching is similar to the previous case.
\end{proof}


\subsection{Combining the loose and tight cases to form a single poset map}

Our matchings were all defined by studying poset maps with domains the facets of $\mN(SG_{n,2})$.  To apply Theorem~\ref{patchwork}, we need to show that these maps may be combined into a single poset map in a coherent manner.
Consider the poset $Q(n,2)$ formed by identifying along commonly named elements the posets $Q_\alpha$ from the constructions of our matchings.
In other words, $Q(n,2)$ has a unique minimal element $A$, a maximal chain on two vertices labeled $A<B_\alpha$ for each loose $n$-set $\alpha$, and a maximal chain of length three labeled $A<B<C_\alpha$ for each tight $n$-set $\alpha$ that all share the common subchain $A<B$.
For each of the poset maps $\theta_{\alpha}$ and $\varphi_{\alpha}$ defined in the previous subsection, we view them as a map from $P_\alpha$ to $Q(n,2)$.

Let $P(n, 2)$ denote the face poset of $\mN(SG_{n,2})$.
We define a map $\Phi$ from $P(n,2)$ to $Q(n,2)$  by mapping a face $x\in \Sigma_{\alpha}$ to
\[
\Phi(x) = \left\{ \begin{array}{ll}
\theta_{\alpha}(x) &  \mbox{if } \alpha \mbox{ is loose} \\
\varphi_{\alpha}(x) &  \mbox{if } \alpha \mbox{ is tight } \\
\end{array}
\right.
\]

\begin{lemma}
$\Phi$ is a well-defined poset map.
\end{lemma}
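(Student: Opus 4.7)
The statement has two halves: that $\Phi$ is well-defined (its value does not depend on the facet $\Sigma_\alpha$ containing $x$ that we use to compute it) and that $\Phi$ is order preserving. Order preservation is the easy half. Given $x \subseteq y$ in $P(n,2)$, any facet $\Sigma_\alpha$ containing $y$ also contains $x$, and each $\theta_\alpha$ and $\varphi_\alpha$ is manifestly monotone by construction. So once well-definedness is in hand, order preservation follows by evaluating both $\Phi(x)$ and $\Phi(y)$ through the same $\alpha$ and appealing to monotonicity of the constituent maps.

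For well-definedness, I would case on the proposed image of $\Phi(x)$ in $Q(n,2)$. The value $A$ is the unique minimum of $Q(n,2)$, so the only concern is that some competing facet might force a strictly larger value. The value $B$ is shared across every tight $\varphi_\alpha$, so any two tight facets containing $x$ must agree on whether $B$ is the answer. The values $B_\alpha$ (loose $\alpha$) and $C_\alpha$ (tight $\alpha$) are unique to their $\alpha$, and the task is to verify that the witnessing $\alpha$ is itself uniquely determined by $x$.

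The verifications rest on a handful of combinatorial facts about $SG_{n,2}$, each a short consequence of the cyclic/parity structure and the remarks in Section~\ref{Background}: (a) for two same-parity tight $n$-sets $\alpha,\alpha'$, the intersection $\Sigma_\alpha \cap \Sigma_{\alpha'}$ equals the simplex on the opposite-parity tight vertices, and the lexicographic labels $v^1,\ldots,v^{n+1}$ depend only on the parity class, so the triangulation distinguishing the $A$-faces is the same in $\varphi_\alpha$ and $\varphi_{\alpha'}$; (b) for opposite-parity tight $\alpha,\alpha'$, the intersection $\Sigma_\alpha \cap \Sigma_{\alpha'}$ is either empty or a single shared outer neighbor, a vertex sent to $A$ by both maps; (c) the unique outer neighbor $\eta_\alpha$ of a tight $\alpha$ is loose and has no other tight neighbor in $SG_{n,2}$, so $C_\alpha$ unambiguously identifies $\alpha$; (d) the unordered pair of outer neighbors of a loose $\alpha$ uniquely determines $\alpha$.

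The main obstacle is the cross-type case, when $x$ lies simultaneously in a loose facet $\Sigma_\beta$ and a tight facet $\Sigma_\gamma$. The worry is that one might have $\theta_\beta(x) = B_\beta$ while $\varphi_\gamma(x) \in \{B, C_\gamma\}$. The key observation to rule this out is that $\varphi_\gamma(x) \neq A$ forces $x$ either to contain $\eta_\gamma$ or to contain at least two opposite-parity tight vertices of the triangulated $(n{+}1)$-gon in a non-triangulation configuration. A short counting argument shows that the common neighbors in $SG_{n,2}$ of any two same-parity tight $n$-sets must be disjoint from all $n{+}1$ elements of the opposite parity, hence must themselves be tight of the opposite parity. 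This precludes the existence of any loose $\beta$ with $x \subseteq \Sigma_\beta$ in such configurations, resolving the conflict and completing the verification.
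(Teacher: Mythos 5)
Your overall decomposition mirrors the paper's: show order-preservation is automatic once well-definedness holds, then verify well-definedness by casing on whether the two facets containing $x$ are loose--loose, tight--tight, or loose--tight. The loose--loose and tight--tight cases match the paper's argument in substance (the paper's Case~2 actually omits the opposite-parity tight subcase, which you correctly observe gives an intersection of at most a single vertex). Your argument also shows why the matching agrees, by restricting which facets can contain a non-$A$ face, which is a legitimate reformulation of the paper's direct description of $\Sigma_{\alpha^1} \cap \Sigma_{\alpha^2}$.

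However, there is a gap in your treatment of the cross-type case. You correctly observe that $\varphi_\gamma(x) \neq A$ forces $x$ either to contain $\eta_\gamma$ (giving $\varphi_\gamma(x) = C_\gamma$) or to contain at least two of the $v^i$ in a non-triangulation configuration (giving $\varphi_\gamma(x) = B$). Your stated counting argument --- that a common neighbor of two same-parity tight $n$-sets must be tight --- only disposes of the second alternative, since it only applies when $x$ already contains two vertices of the form $v^i$. It does not cover the first alternative when $x = \{\eta_\gamma, v^i\}$ is an edge with $i \notin \{j, j+1\}$; there the counting argument is silent because $x$ contains only one tight vertex. For that subcase one must argue separately: writing $p$ for the unique same-parity element of $[2n+2]\setminus\gamma$, the set $\eta_\gamma$ consists of $p$ together with all opposite-parity elements except $p\pm1$, while $v^i$ (for $i\neq j, j+1$) contains both $p-1$ and $p+1$; hence $\eta_\gamma \cup v^i$ covers $p$ and every opposite-parity element, so $[2n+2]\setminus(\eta_\gamma\cup v^i) = \gamma$ and the only common neighbor of $\eta_\gamma$ and $v^i$ is $\gamma$ itself, which is tight. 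Thus no loose facet contains this edge. With that additional observation your argument closes; the paper achieves the same end by computing the intersection $\Sigma_{\alpha^1}\cap\Sigma_{\alpha^2}$ explicitly in the $|\alpha^1\cap\alpha^2|=n-1$ subcase and checking that the resulting edge $\{\beta,\eta_{\alpha^1}\}$ lands in $\{v^j,v^{j+1},\eta_{\alpha^1}\}$ and in the immediate-plus-outer, rather than outer-plus-outer, configuration for $\alpha^2$.
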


\begin{proof}
Assuming that $\Phi$ is well-defined, that it is a poset map is immediate since $\theta$ and $\varphi$ are poset maps.
To verify $\Phi$ is well-defined, we need to check that faces contained in more than one facet are mapped coherently by $\Phi$.
Let $\alpha^1$ and $\alpha^2$ be two stable sets that yield the facets $\Sigma_{\alpha^1}$ and $\Sigma_{\alpha^2}$ in $\mN(SG_{n,2})$.

\underline{Case 1:}
Suppose $\alpha^1$ and $\alpha^2$ are both loose sets.
We consider the size of the intersection of their respective facets.
If $|\Sigma_{\alpha^1} \cap \Sigma_{\alpha^2}| = 4$, then $\alpha^1 = \alpha^2$ and we are done.
Suppose $|\Sigma_{\alpha^1} \cap \Sigma_{\alpha^2}| = 3$.
Say $\{v^1, v^2, v^3\} \subset \Sigma_{\alpha^1} \cap \Sigma_{\alpha^2}$ along with all their subsets for some vertices $v^1, v^2,$ and $v^3$.
Consider the support of these vertices, $supp(v^1, v^2, v^3)$, where $supp(v^1,\ldots,v^k):=\cup_i v^i$ as sets.
We know each of these vertices avoid the stable $n$-sets $\alpha^1$ and $\alpha^2$, thus there are at most $n+1$ viable elements remaining in $[2n+2]$.
However, $|supp(v^1, v^2, v^3)| \ge n+2$, since the intersection of any two of the vertices can have at most $n-1$ elements in common.
Thus, this case does not occur.
Suppose $|\Sigma_{\alpha^1} \cap \Sigma_{\alpha^2}| \le 2$.
In this case, $\Sigma_{\alpha^1} \cap \Sigma_{\alpha^2}$ is either a single vertex or an edge between an inner and an outer neighbor.
As any such face is sent to $A$ by both $\theta_{\alpha^1}$ and $\theta_{\alpha^2}$, we see that $\Phi$ is well-defined on the intersection of pairs of loose sets.

\underline{Case 2:}
If $\alpha^1$ and $\alpha^2$ are both tight sets with $\alpha^1 \ne \alpha^2$, then $|\alpha^1 \cap \alpha^2|= n-1$, implying that $\Sigma_{\alpha^1} \cap \Sigma_{\alpha^2}$ is an $n$-dimensional simplex $\Sigma$.
Using our previous notation, $\Sigma=\{v^1,v^2,v^3,\ldots,v^{n+1}\}$.
For a given face $x\in \Sigma$, every map $\varphi_\alpha$ maps $x$ to either $A$ or $B$ in a coherent manner, as the definitions of the $\varphi$-maps are the same on $\Sigma$.
Thus, $\Phi$ is well-defined on the intersection of pairs of tight sets.

\underline{Case 3:}
Suppose $\alpha^1$ is a tight set and $\alpha^2$ is a loose set.
If $|\alpha^1 \cap \alpha^2| \le n-2$, then $|supp(\alpha^1,\alpha^2)| \ge n+2$.
Hence, $|[2n+2] \setminus supp(\alpha^1,\alpha^2)|\le n$.
Thus, $\Sigma_{\alpha^1}$ and $\Sigma_{\alpha^2}$ intersect in a vertex $x$, and $\Phi (x) =A$ is well-defined.

If $|\alpha^1 \cap \alpha^2| = n-1$ consider $F=[2n+2]\setminus supp(\alpha^1, \alpha^2)$.
We know $|F|=n+1$ as $|supp(\alpha^1, \alpha^2)|=n+1$.
Moreover, $F$ consists of $n$ elements of the opposite parity of $\alpha^1$ and one element, say $p$, of the same parity.
From this we know that $p \pm 1$ is in $F$, but not both.
Hence, $F$ contains only two stable $n$-sets, one set $\beta$ which is tight and whose elements
are of opposite parity of $\alpha^1$
and another set $\gamma$, which consists of $p$ and $n-1$ elements of opposite parity of $\alpha^1$ not including $p\pm 1$.
Thus, $\Sigma_{\alpha^1} \cap \Sigma_{\alpha^2} = \{\beta, \gamma\}$, all faces of which are mapped to $A$ by $\varphi_{\alpha^1}$.

We next show that all faces of $F$ are mapped to $A$ by $\theta_{\alpha^2}$ as well.
As $\alpha^2$ is loose and $\beta$ is a tight neighbor, we know that $\beta$ is an outer neighbor of $\alpha^2$.
In addition, $\gamma$ is an immediate neighbor of $\alpha^2$ by construction.
The only edge sent to $B_{\alpha^2}$ by $\theta_{\alpha^2}$ is the one formed by both outer neighbors of $\alpha^2$, which this edge is not, thus it is sent to $A$.
Hence, $\Phi$ is well-defined.

\end{proof}

To use Theorem~\ref{patchwork}, we now need to verify that our previous matchings are valid.
Recall that
\[\Phi^{-1}(A)=\cup_\alpha\varphi^{-1}_\alpha(A) \bigcup \cup_\alpha\theta_\alpha^{-1}(A),\]
and that none of these preimages carried matchings.
The other preimages of $\Phi$ correspond to the preimages of $\theta_\alpha$ or $\varphi_\alpha$ depending whether $\alpha$ is loose or tight.
Our previous matchings may therefore be applied, after noting that on $\Phi^{-1}(B)$ the matching is independent of choice of $\alpha$.
Hence by Theorem~\ref{patchwork} and the remark following it we have that $\mN(SG_{n,2})$ simplicially collapses onto the complex whose face poset is $\Phi^{-1}(A)$.


\section{Analysis of the complex of critical faces}\label{Sphere1}

Throughout this section it will be useful to refer to Figure~\ref{p}, illustrating the case $n=3$.
Denote by $\mNt$ the complex of critical faces given by $\Phi^{-1}(A)$.
By construction, $\mNt$ is two-dimensional and pure;  in this section we prove that it is the boundary of a three-dimensional simplicial polytope.
Our approach is to first construct a planar graph inducing a triangulation of $S^2$ that realizes $\mNt$, then to apply the following theorem.
Recall that a graph $G$ is \emph{$3$-connected} if for any pair of vertices $v$ and $w$ in $G$, there exist three disjoint paths from $v$ to $w$.

\begin{theorem}\label{Steinitz}{\rm (Steinitz' theorem, see \cite{ZieglerLectures})} A simple graph $G$ is the one-skeleton of a three-dimensional polytope if and only if it is planar and $3$-connected.
\end{theorem}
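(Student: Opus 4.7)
The plan is to prove the two implications separately. The forward direction (every 3-polytope has planar, 3-connected 1-skeleton) is comparatively routine; the converse, the classical theorem of Steinitz, contains the main difficulty. For necessity, given a 3-polytope $P$ with graph $G$, a planar drawing of $G$ is obtained via a \emph{Schlegel diagram}: fix a facet $F$ of $P$ and a point $x$ just beyond $F$ in the open halfspace opposite to $P$, chosen close enough to the relative interior of $F$ that every other facet of $P$ is visible from $x$ through the hyperplane of $F$. Central projection from $x$ onto the affine hull of $F$ yields a straight-line planar embedding of $G$ with $F$ bounding the outer face. For 3-connectedness, I would prove the $d=3$ case of Balinski's theorem: to show $G \setminus \{v\}$ is connected for every vertex $v$, choose a linear functional $\varphi$ vanishing at $v$ and non-constant on $P$, and observe that every vertex of $G \setminus \{v\}$ admits monotone edge-paths to the vertices of $P$ where $\varphi$ attains its maximum and minimum, so that any two such vertices can be joined through these extrema, with a detour along the boundary of a 2-face if necessary.

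For the converse, my plan is induction on the number of edges of $G$, with base case $K_{4}$ realized by a tetrahedron. The combinatorial heart of the induction is a reduction lemma: every simple, planar, 3-connected graph $G \neq K_{4}$ contains an edge $e$ whose contraction $G/e$ remains simple, planar, and 3-connected. I would derive this from Tutte's wheel theorem, using Whitney's uniqueness theorem for embeddings of 3-connected planar graphs to ensure that the planar embedding of $G/e$ is canonically induced from that of $G$. By the inductive hypothesis, $G/e$ is the 1-skeleton of some 3-polytope $P'$, and the remaining task is to reverse the contraction geometrically.

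This geometric splitting step is the main obstacle. Given $P'$ and the data for splitting the coalesced vertex $w$ of $G/e$ back into the two endpoints $u,v$ of $e$, Whitney's theorem implies that the cyclic order of edges around $w$ in the planar embedding partitions into two contiguous arcs, one destined for $u$ and one for $v$. I would choose a hyperplane $H$ through $w$ such that the edges of $P'$ meeting $w$ are separated into these two arcs by the open halfspaces bounded by $H$ (such an $H$ exists because any cyclic arrangement of directions on a sphere can be split into two arcs by a great circle). Replace $w$ by two nearby points $u,v$ obtained by translating $w$ a small distance in opposite directions transverse to $H$, and take $P$ to be the convex hull of $\{u,v\}$ together with the other vertices of $P'$. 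The key verification is that for sufficiently small displacements, $u$ and $v$ are both vertices of $P$ joined by an edge and the remaining incidences match those of $G$; both reduce to the observation that convex position and prescribed face structure are encoded by strict linear inequalities in the vertex coordinates, hence are open conditions preserved under small perturbations. Iterating this vertex-splitting construction along the reduction sequence from $G$ back to $K_{4}$ produces the desired 3-polytope and completes the proof.
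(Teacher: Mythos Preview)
The paper does not prove Steinitz' theorem; it is stated with a citation to Ziegler's \emph{Lectures on Polytopes} and then used as a black box in Subsection~\ref{polytopeproof}. There is no proof in the paper to compare against.

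That said, your sketch has two issues worth flagging. First, a minor slip in the necessity direction: showing that $G\setminus\{v\}$ is connected for every single vertex $v$ yields only $2$-connectedness. Balinski's argument for $d=3$ removes a \emph{pair} of vertices and chooses the linear functional $\varphi$ to vanish on both (possible since two points in $\R^3$ lie on a hyperplane through the origin after translation); the rest of your description then goes through.

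Second, and more substantively, the geometric vertex-splitting step is not justified by the reasoning you give. You write that the desired incidences of $P$ ``are encoded by strict linear inequalities \ldots\ hence are open conditions preserved under small perturbations.'' But the open-condition argument shows that a small perturbation of the vertices of $P'$ yields a polytope combinatorially equivalent to $P'$---that is, it still realizes $G/e$, not $G$. You are trying to manufacture a \emph{different} combinatorial type, and openness of the old type works against you, not for you. Concretely, if you slide $u$ and $v$ away from $w$ in opposite directions along a single line, it is not automatic that both become vertices of the convex hull (one may land in the interior of the tangent cone at $w$), nor that the $2$-faces through $w$ split exactly as prescribed by the two arcs. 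The standard treatments---the $\Delta Y$/$Y\Delta$-reduction proof in Ziegler, or the Barnette--Gr\"unbaum vertex-splitting argument you are gesturing at---handle this step with considerably more care, typically after a projective transformation normalizing the local geometry at $w$. Your outline correctly locates the obstacle but does not overcome it.
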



\subsection{Construction of $\mNt$}

We want to realize $\mNt$ as a triangulation of $S^2$; we will do so by constructing its one-skeleton in the plane.
We begin with notation and several lemmas.
For a stable $n$-set $\alpha$, let  $\alpha_{\mbox{\small{odd}}}$ be the set of all odd elements of $\alpha$ and let $\alpha_{\mbox{\small{even}}}$ be the set of all even elements of $\alpha$.
Throughout this subsection, unless otherwise indicated, we assume for a stable $n$-set $\alpha=\{\alpha_1, \ldots , \alpha_n\}$ that $\alphaodd =\{\alpha_1, \ldots, \alpha_i\}$ and $\alphaeven = \{\alpha_{i+1}, \ldots, \alpha_n\}$.
Let $P_i$ denote the set of stable $n$-sets consisting of $i$ even elements and $n-i$ odd elements.

\begin{lemma}
$P_i=\{\alpha^0,\ldots,\alpha^n\}$ is lexicographically ordered by setting \[\alpha^0:=\{1,3,\ldots,2(n-i)-1,2(n-i)+2,\ldots,2n\}\] and $\alpha^j:=\alpha^0\ominus 2j$.   Also, $\alpha^n \ominus 2 = \alpha^0$.
\end{lemma}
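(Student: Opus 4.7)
The plan is to verify in turn that $\alpha^0$ is a stable $n$-set in $P_i$, that $\alpha^0,\ldots,\alpha^n$ are pairwise distinct, that together they exhaust $P_i$, and that this listing is lexicographically ordered. The closing identity $\alpha^n \ominus 2 = \alpha^0$ is then immediate from $2(n+1) \equiv 0 \pmod{2n+2}$.

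First I would verify $\alpha^0 \in P_i$ by inspection: it contains $n-i$ odd and $i$ even elements; within-run consecutive elements differ by $2$; and the two cyclic transition gaps (from $2(n-i)-1$ to $2(n-i)+2$, and from $2n$ back around to $1$) each equal $3$, so stability holds. Because $\ominus 2j$ preserves parity $\pmod{2n+2}$, each $\alpha^j$ lies in $P_i$. For distinctness, I would argue that if $\alpha^0 \ominus 2d = \alpha^0$ for some $0 < d \leq n$, then $\alpha^0$ is a disjoint union of orbits under a cyclic shift of order $e := (n+1)/\gcd(n+1,d)$; since this shift acts freely on $\Z/(2n+2)$ each orbit has size exactly $e$, forcing $e \mid n$. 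Combined with $e \mid n+1$ this gives $e = 1$, hence $(n+1) \mid d$, contradicting $0 < d \leq n$.

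To see $|P_i| \leq n+1$, I would analyze the cyclic gap sequence of an arbitrary $\beta \in P_i$: $\beta$ has $n$ cyclic gaps, each $\geq 2$ and summing to $2n+2$, so the gap multiset is either $(4,2,\ldots,2)$, forcing monochromatic parity (i.e., $i=0$ or $i=n$) and yielding $\binom{n+1}{n}=n+1$ sets by direct count, or $(3,3,2,\ldots,2)$, forcing $\beta$ to consist of a single odd run of length $n-i$ and a single even run of length $i$, with $\beta$ determined by the starting element of its odd run (one of the $n+1$ odd values in $[2n+2]$). Combined with the distinctness above, this forces $P_i = \{\alpha^0,\ldots,\alpha^n\}$.

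Finally, the lex ordering rests on the observation that for each $j \in \{0,1,\ldots,n-1\}$, $\alpha^j$ contains exactly one of $\{1,2\}$: $1 \in \alpha^j$ iff $2j+1 \in \alpha^0$, which holds for $0 \leq j \leq n-i-1$, and $2 \in \alpha^j$ iff $2j+2 \in \alpha^0$, which holds for $n-i \leq j \leq n-1$. Under $\ominus 2$, this unique small element wraps to $\{2n+1,2n+2\}$ while all other entries of $\alpha^j$ shift down by $2$ without wrap, so the sorted tuple for $\alpha^{j+1}$ is obtained from that for $\alpha^j$ by deleting the first entry, subtracting $2$ from each remaining entry, and appending the wrapped value at the end. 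An element-by-element comparison of the two sorted tuples then yields $\alpha^j <_{\text{lex}} \alpha^{j+1}$. The main obstacle is this last comparison, which requires careful bookkeeping of how the wrap reshuffles the sorted order; the gap-pattern classification is the central combinatorial content but becomes transparent once the multiset decomposition is identified.
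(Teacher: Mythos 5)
Your proof is correct, and it is substantially more rigorous than the paper's, which simply displays the lexicographically ordered list of members of $P_i$ and asserts ``by inspection'' that $\ominus 2$ advances one step along it. You supply the three pieces the paper leaves implicit: (1) a distinctness argument for $\alpha^0,\ldots,\alpha^n$ via orbit sizes of the $\ominus 2d$ shift on $\Z/(2n+2)$; (2) an upper bound $|P_i|\leq n+1$ obtained by classifying the cyclic gap multiset of a stable $n$-set in $P_i$ as $(4,2,\ldots,2)$ (monochromatic parity, $i\in\{0,n\}$) or $(3,3,2,\ldots,2)$ (one odd run, one even run, $0<i<n$); and (3) an entrywise comparison of sorted tuples showing that the wrap of the unique small element $\{1\text{ or }2\}$ under $\ominus 2$ strictly increases the tuple in lex order. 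The gap-sequence classification is the genuinely new ingredient relative to the paper's argument: it not only bounds $|P_i|$ but also explains structurally why $P_i$ is a single $\ominus 2$-orbit, whereas the paper treats this as self-evident from the displayed list. One small cleanup: for the lex comparison it is worth saying explicitly that the entrywise inequalities $a_{\ell+1}-2\geq a_\ell$ and $a_1+2n\geq a_n$ follow from the cyclic gaps being $\geq 2$, and that strictness must occur somewhere because the gaps sum to $2n+2>2n$; at the first position of disagreement the new tuple is strictly larger, giving $\alpha^j<_{\mathrm{lex}}\alpha^{j+1}$.
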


\begin{proof}
For $P_0$, it is immediate that $\{1,3,5,\ldots,2n-1\}\leq \{1,3,5,\ldots,2n-3,2n+1\} \leq \{1,3,5,\ldots,2n-5,2n-1,2n+1\}\leq \cdots$
orders $P_0$ lexicographically.
Given an $\alpha\in P_0$, it follows by inspection that $\alpha\ominus 2$ is the next term in the sequence.
The set $P_n$ is handled similarly.

For the case of $P_i$, $0<i<n$, it is immediate that $\{1,3,5,\ldots,2(n-i)-1,2(n-i)+2,\ldots,2n\}\leq \{1,3,5,\ldots,2(n-i)-3,2(n-i),\ldots,2n,2n+1\}\leq \cdots \leq \{3,5,7,\ldots,2(n-i)+1,2(n-i)+4,\ldots,2n+2\}$
orders $P_i$ lexicographically.
Given an $\alpha\in P_i$, it follows by inspection that $\alpha\ominus 2$ is the next term in the sequence.
\end{proof}

\begin{lemma}\label{neigbors_cycle}
If $\alpha \in P_i$ and $\beta, \gamma \in P_{i+1}$ such that $|\alpha \cap \beta| = |(\alpha\ominus 2) \cap \beta| = n-1$,
$|(\alpha\ominus 2)  \cap \beta| = |(\alpha\ominus 2) \cap \gamma| = n-1$,
and $|(\alpha\ominus 2) \cap (\beta \ominus 2)| = |(\alpha\ominus 4) \cap \beta\ominus 2| = n-1$, then $\gamma=\beta\ominus 2$.
\end{lemma}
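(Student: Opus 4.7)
The plan exploits the $\ominus 2$-shift, which is an automorphism of $[2n+2]$ preserving all intersection cardinalities. My approach is to establish a local structure result on the bipartite graph with parts $P_i$ and $P_{i+1}$ and edges given by sets sharing $n-1$ elements, and then combine it with shift equivariance.

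I first note that the last pair of conditions, $|(\alpha \ominus 2) \cap (\beta \ominus 2)| = n-1$ and $|(\alpha \ominus 4) \cap (\beta \ominus 2)| = n-1$, are automatic from the first pair by applying the shift to both arguments. Thus they merely confirm that $\beta \ominus 2$ is a valid candidate for $\gamma$ by being a common neighbor of $\alpha \ominus 2$ and $\alpha \ominus 4$.

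The central claim I would prove is: every $\alpha' \in P_i$ has exactly two neighbors in $P_{i+1}$ under the intersection-$(n-1)$ relation. A neighbor $\beta'$ of $\alpha'$ must be obtained by removing a single odd element $o \in \alpha'$ and inserting a single even element $e \notin \alpha'$, with $\beta'$ required to remain stable. Stability forces $e \pm 1 \pmod{2n+2}$ to be either equal to $o$ or to lie outside $\alpha'$; combined with the lexicographic description of $P_i$ from the preceding lemma, direct inspection shows that exactly two such swaps exist, corresponding to the two parity-transition points in the cyclic arrangement of the elements of $\alpha'$. By shift equivariance it suffices to verify this for the representative $\alpha^0 \in P_i$.

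Combining these ingredients, since $\beta$ is a common neighbor of $\alpha$ and $\alpha \ominus 2$, the shift $\beta \ominus 2$ is a common neighbor of $\alpha \ominus 2$ and $\alpha \ominus 4$. Because no $n$-subset of $[2n+2]$ is fixed by $\ominus 2$ (each orbit on $[2n+2]$ has size $n+1 > n$), we have $\beta \neq \beta \ominus 2$. Hence $\{\beta, \beta \ominus 2\}$ are exactly the two neighbors of $\alpha \ominus 2$ in $P_{i+1}$, and any $\gamma$ in this neighbor set distinct from $\beta$ must equal $\beta \ominus 2$. The main obstacle is the structural claim that every element of $P_i$ has exactly two neighbors in $P_{i+1}$, which requires a routine but careful case analysis distinguishing $i = 0$, $i = n$, and $0 < i < n$ (matching the split in the preceding lemma) while tracking the interplay between stability and the cyclic structure modulo $2n+2$.
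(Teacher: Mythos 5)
Your route is genuinely different from the paper's. The paper's proof is a direct computation: writing $\alpha$, $\alpha\ominus 2$, $\alpha\ominus 4$ in terms of coordinates, it identifies $\betaodd = \alphaodd \cap (\alpha\ominus 2)_{\text{odd}}$ and $\gammaodd = (\alpha\ominus 2)_{\text{odd}} \cap (\alpha\ominus 4)_{\text{odd}}$ (and similarly for the even parts) and then simply checks that $\betaodd \ominus 2 = \gammaodd$, $\betaeven \ominus 2 = \gammaeven$. You instead propose a structural lemma (each element of $P_i$ has exactly two neighbors in $P_{i+1}$) and combine it with $\ominus 2$-equivariance and an orbit-size argument to show $\beta\neq\beta\ominus 2$. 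That is a cleaner conceptual route, and the orbit-size observation is a nice touch. But note that your structural claim is not the same as the paper's later Lemma~\ref{neighbors} (which concerns the unique common neighbor of a pair $\alpha$, $\alpha\ominus 2$, not the number of neighbors of a single vertex), so you cannot simply cite it; you would actually need to carry out the case analysis you sketch.

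There are two gaps you should repair. First, the ``exactly two neighbors'' claim is the whole content of your argument, and the sketch (``direct inspection shows \dots two parity-transition points'') is too thin; it needs the same kind of careful work the paper spends on the lexicographic description of $P_i$. Second, and more seriously, your conclusion requires $\gamma\neq\beta$, which you assume without justification. From the hypotheses as literally written (which only force $\gamma$ to be \emph{a} neighbor of $\alpha\ominus 2$), the conclusion $\gamma=\beta\ominus 2$ would actually be false for $\gamma=\beta$. The paper's proof implicitly treats $\gamma$ as the common neighbor of both $\alpha\ominus 2$ and $\alpha\ominus 4$, and under that reading $\gamma\neq\beta$ does follow (for $n\geq 2$): if $\beta$ were simultaneously a neighbor of $\alpha$, $\alpha\ominus 2$, and $\alpha\ominus 4$, then $\beta$ would have three distinct neighbors in $P_i$, contradicting the parity-reversed version of your own two-neighbor claim; alternatively, equivariance would give $\beta\oplus 4\in\{\beta,\beta\oplus 2\}$, which is impossible by the same orbit-size reasoning you used for $\ominus 2$. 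You should state and use this explicitly rather than leaving $\gamma\neq\beta$ as an unspoken assumption.
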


\begin{proof}
We maintain our ordering of the elements for a stable $n$-set $\alpha=\{\alpha_1, \ldots , \alpha_n\}$ as $\alphaodd =\{\alpha_1, \ldots, \alpha_i\}$ and $\alphaeven = \{\alpha_{i+1}, \ldots, \alpha_n\}$.
As $\alpha \in P_i$ we have
\begin{eqnarray}
\alphaodd & = & \{\alpha_1, \ldots, \alpha_i\} \nonumber \\
(\alpha \ominus 2)_{\mbox{\small{odd}}} & = &  \{\alpha_1-2, \ldots, \alpha_i - 2\}\nonumber\\
& = &  \{\alpha_1, \ldots, \alpha_{i-1}, \alpha_n +1\}\nonumber\\
(\alpha \ominus 4)_{\mbox{\small{odd}}} & = &  \{\alpha_1-4, \ldots, \alpha_i - 4\}\nonumber\\
& = &  \{\alpha_1, \ldots, \alpha_{i-2}, \alpha_n -1 , \alpha_n + 1\}\nonumber
\end{eqnarray}
By our assumptions about $\beta$ and $\gamma$ we have
\begin{eqnarray}
\betaodd &=& \alphaodd \cap (\alpha \ominus 2)_{\mbox{\small{odd}}} = \{\alpha_1, \ldots, \alpha_{i-1}\}\nonumber\\
\gammaodd &=& (\alpha \ominus 2)_{\mbox{\small{odd}}} \cap (\alpha \ominus 4)_{\mbox{\small{odd}}} = \{\alpha_1, \ldots, \alpha_{i-2}, \alpha_n +1\}\nonumber
\end{eqnarray}
Thus, $\betaodd \ominus 2 = \gammaodd$.
By a similar argument we see that $\betaeven \ominus 2 = \gammaeven$ and hence $\beta \ominus 2 =\gamma$.
So $\beta$ and $\gamma$ are neighbors in $P_{i+1}$.

\end{proof}

To construct our planar graph, order the elements of $P_0$ lexicographically and denote them $v^0, \ldots , v^n$.
Draw a regular $(n+1)$-gon, which we will also refer to as $P_0$, and cyclically label its vertices by $v^0, \ldots , v^n$.
Triangulate $P_0$ so that each diagonal in the triangulation has the vertex $v^0$ as an endpoint.
Next, we draw a second regular $(n+1)$-gon, denoted $P_1$, around $P_0$, satisfying two conditions:
\begin{itemize}
\item The vertices of $P_1$ lie outside $P_0$ on lines through the center point of $P_0$ and the midpoints of the edges of $P_0$, and
\item The edges of $P_1$ do not intersect $P_0$.
\end{itemize}
Label the vertices of the polygon $P_1$ by the elements of the set $P_1$, where the labels are placed cyclically about the circle in the lexicographic order; the lexicographically first label for $P_1$ is placed on the ray between the center of $P_0$ and the edge between $v^0$ and $v^1$.
Connect a vertex $v$ of $P_0$ to a vertex $w$ of $P_1$ if $|v\cap w| = n-1$, i.e. connect $w$ to the endpoints of the edge of $P_0$ that it is nearest to.

We inductively continue this process for $i\leq n$ by drawing an $(n+1)$-gon denoted $P_i$ around $P_{i-1}$.
Label the vertices of the polygon $P_i$ with the elements of the set $P_i$ in such a way that one may connect a vertex $v$ of $P_{i-1}$ to a vertex $w$ of $P_i$ exactly when $|v\cap w| = n-1$.
This results in the vertices of $P_i$ being labeled cyclically with respect to lexicographic order, with the requirement that the lex-first label for $P_i$ is placed on the ray between the center of $P_0$ and the edge between the lexicographically first and second vertices of $P_{i-1}$.
To complete the construction, once $P_n$ has been drawn and connected to $P_{n-1}$, draw arcs representing the edges $\{\{2,4, 6, \ldots , 2n \}, e \}$ for all even, tight stable $n$-sets $e$.
It is immediate from our lemmas that this construction is legitimate, and also it is clear that it yields a triangulation of the sphere.

To finish our proof, we must show that the facets of $\mNt$ are the same as the facets of this triangulation, i.e. that this triangulation is actually a realization of $\mNt$.
Observe that both $P_0$ and $P_n$ bound triangulated $(n+1)$-gons where the vertices of $P_0$ are the odd tight sets while the vertices of $P_n$ are the even tight sets and the diagonals in the triangulations all emanate from the lexicographically smallest tight stable set in each of $P_0$ and $P_n$.
These triangulated polygons correspond exactly to the triangulated polygons contained in the $N_\alpha$ complexes defined in regards to facets of tight $n$-sets.
What remains is to show that every other facet of our triangulation corresponds to a two-dimensional simplex in $\mNt$ and vice versa.

Let $\Sigma$ be a simplex in $\mNt$.  We will show that $\partial \Sigma$ exists in our constructed graph.
We consider three cases.

\underline{Case 1:}
Suppose $\Sigma$ consists of only tight vertices.  Then $\Sigma = \{v^1, v^j, v^{j+1}\}$ for some $j= 2, \ldots, n.$
As $v^j$ and $v^{j+1}$ are lexicographically ordered, we have $v^j= v^{j+1} \ominus 2 $
In the construction of our graph we cyclically connected vertices ordered lexicographically, hence the
edge $\{v^j, v^{j+1}\}$ exists in our graph.  Moreover, in the construction of our graph we connected
all vertices of $P_0$ to the vertex $\{1, 3, 5, \ldots, 2n-1\}$ and all vertices of $P_n$ to  the vertex $\{2,4, 6, \ldots , 2n \}$.
These are precisely the edges $\{v^1, v^j\}$ and $\{v^1, v^{j+1}\}$.

\underline{Case 2:} Suppose $\Sigma$ consists of tight and loose vertices.
This case follows easily from the following lemma.

\begin{lemma}\label{neighbors}
For $\alpha \in P_i$, there exists a unique vertex $\pi\in P_{i+1}$ such that $|\alpha\cap \pi|=|\alpha\ominus 2 \cap \pi| = n-1$.
\end{lemma}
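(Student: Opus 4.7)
Because $\pi\in P_{i+1}$ has exactly one more even element and one fewer odd element than $\alpha\in P_i$, the condition $|\alpha\cap\pi|=n-1$ forces $\alphaeven\subseteq\pieven$ and $\piodd\subseteq\alphaodd$, with each of these inclusions having cardinality gap exactly one. Applying the same observation to $\alpha\ominus 2$, which also lies in $P_i$, yields $(\alphaeven\ominus 2)\subseteq\pieven$ and $\piodd\subseteq(\alphaodd\ominus 2)$, and hence
\[
\alphaeven\cup(\alphaeven\ominus 2)\;\subseteq\;\pieven, \qquad \piodd\;\subseteq\;\alphaodd\cap(\alphaodd\ominus 2).
\]

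The key step is a structural lemma: for any stable $n$-set $\alpha\in P_i$, each of $\alphaodd$ and $\alphaeven$, when nonempty, consists of cyclically consecutive elements of the appropriate parity in $[2n+2]$. I would prove this by counting---if $\alphaodd$ broke into $c\ge 2$ cyclic runs of total length $n-i$, the even elements of $[2n+2]$ adjacent to $\alphaodd$ would number $(n-i)+c$, leaving only $(n+1)-((n-i)+c)=i+1-c$ even slots available for $\alphaeven$, which is fewer than the required $i$, a contradiction to stability. Given this single-run structure, a direct calculation gives $|\alphaodd\cap(\alphaodd\ominus 2)|=n-i-1=|\piodd|$, and when $\alphaeven$ is nonempty, $|\alphaeven\cup(\alphaeven\ominus 2)|=i+1=|\pieven|$. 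Matching sizes against the containments above forces
\[
\piodd=\alphaodd\cap(\alphaodd\ominus 2), \qquad \pieven=\alphaeven\cup(\alphaeven\ominus 2),
\]
proving both existence and uniqueness. Stability of the resulting $\pi$ is then immediate, since each forbidden adjacency for $\pi$ is already forbidden for $\alpha$ or for $\alpha\ominus 2$.

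The one case requiring special care is $i=0$, where $\alphaeven=\emptyset$ and the lower bound for $\pieven$ is vacuous. There $\piodd$ is still determined as above, $|\pieven|=1$, and the single element of $\pieven$ is pinned down by the stability of $\pi$ itself: it must be the unique even element of $[2n+2]$ cyclically non-adjacent to every element of $\piodd$. Uniqueness follows from $\piodd$ being a cyclic run of odd elements of length $n-1$, equivalently missing exactly two cyclically consecutive odd elements $t$ and $t-2$, from which a short check shows that only the even element $t-1$ avoids all remaining adjacency restrictions. The principal obstacle in the argument will be establishing the structural single-run lemma for $\alphaodd$ and $\alphaeven$; once that is in hand, the rest is a combination of size counts and a short case analysis for the $i=0$ boundary.
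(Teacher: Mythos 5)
Your proof is correct and lands on the same explicit description of $\pi$ that the paper uses, namely $\pi = \bigl(\alphaodd\cap(\alphaodd\ominus 2)\bigr) \cup \bigl(\alphaeven\cup(\alphaeven\ominus 2)\bigr)$, but it is organized around a different key lemma. The paper splits into two cases according to whether $\alpha$ is tight or loose: in the tight case it isolates the one admissible extra element $p$ by an exclusion count over $[2n+2]$, and in the loose case it writes down the formula for $\pi$ and asserts that its stability, the intersection sizes, and uniqueness ``follow from the definitions.'' You instead first establish a structural fact---that for a stable $\alpha\in P_i$ the sets $\alphaodd$ and $\alphaeven$ are each a single cyclic run within the odd (resp.\ even) positions of $[2n+2]$, via a parity-slot count---and then read off both existence and uniqueness of $\pi$ from the forced inclusions $\piodd\subseteq\alphaodd\cap(\alphaodd\ominus 2)$, $\alphaeven\cup(\alphaeven\ominus 2)\subseteq\pieven$ together with the size equalities $|\alphaodd\cap(\alphaodd\ominus 2)|=n-i-1$ and $|\alphaeven\cup(\alphaeven\ominus 2)|=i+1$. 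This yields a unified argument in which the tight case ($i=0$) requires only a small supplement rather than a separate method, and it makes explicit the counting the paper leaves implicit; it also corrects what looks like a transposition of $i$ and $n-i$ in the paper's stated cardinalities $i-1$ and $n-i+1$. The cost is proving the run lemma, but that lemma is clean, and worth having on hand for the surrounding arguments.
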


\begin{proof}
Let $\alpha$, $\alpha \ominus 2 \in P_i$ be two neighboring vertices.  We consider two cases.

Suppose $\alpha$ and $\alpha \ominus 2$ are both tight sets.  Without loss of generality
we may assume $\alpha, (\alpha \ominus 2) \in P_0$ and we have $|\alpha \cap (\alpha \ominus 2)| = n-1$.  If $\pi$ is a common
neighbor to both $\alpha$ and $\alpha \ominus 2$ in $\mNt$ then, by construction, $\pi = \alpha \cap (\alpha \ominus 2) \cup \{p\}$ for some
$p \in [2n+2]$.  We claim $p$ must be $\alpha_n - 1$.  By definition, $p$ cannot be any element in
$\alpha\cup (\alpha \ominus 2)$.  There are $n+1$ such elements.  Additionally, $p$ cannot be any of the $n$ elements adjacent
to an element in $\alpha\cap (\alpha \ominus 2)$.  Thus, we are left with only one choice for $p$ as claimed.

Suppose $\alpha$ and $\alpha \ominus 2$ are both loose sets.
Set $\pi = (\alphaodd \cap (\alpha_{\mbox{\small{odd}}} \ominus 2)) \cup (\alphaeven \cup (\alpha_{\mbox{\small{even}}} \ominus 2))$.
From our definition of $\pi$ it is immediate that $\pi$ is a stable $n$-set.
Moreover, the definitions of $\alpha, (\alpha \ominus 2),$ and $\pi$ we have $|\alpha \cap \pi| = |(\alpha \ominus 2)\cap \pi| = n-1$.
Finally, as
$|\alphaodd \cap (\alpha_{\mbox{\small{odd}}} \ominus 2)| = i-1$ and
 $|\alphaeven \cup (\alpha_{\mbox{\small{even}}} \ominus 2)| = n -i + 1$ we have
that $\pi\in P_{i+1}$.  The uniqueness of $\pi$ follows from the definitions of $\alpha$ and $\alpha \ominus 2$.
Thus our claim holds.
\end{proof}

A similar argument shows that if $\alpha, (\alpha \ominus 2) \in P_i$, then there exists a unique vertex $\pi \in P_{i-1}$ that is a neighbor to both $\alpha$ and $(\alpha \ominus 2)$ for $i = 1, \ldots, n+1$, where $\pi= (\alphaeven \cap (\alpha_{\mbox{\small{even}}} \ominus 2)) \cup (\alphaodd \cup (\alpha_{\mbox{\small{odd}}} \ominus 2))$.

\underline{Case 3:}
Suppose $\Sigma$ consists of only loose vertices.
By construction of our poset map, we know that two of these vertices, $v^r$ and $v^s$, are immediate neighbors to some vertex $\alpha$
and the other vertex, $v^t$ is an outer neighbor of $\alpha$.
Set $\alpha =\{\alpha_1, \ldots, \alpha_n\}$ where $\alpha$ is a concatenation of $\alphaodd$ and $\alphaeven$.
Then, without loss of generality,
$\alpha \ominus 1 = v^r$ and $\alpha \oplus 1 = v^s$.  This implies that $v^s \ominus 2 = v^r$ which is an edge in our graph.
By definition of an outer neighbor,
$v^t = \{(\alpha_1 +1 , \alpha_2 +1, \ldots, \alpha_{j-1} +1 , \alpha_j \pm 2, \alpha_{j+1}+1 , \ldots, \alpha_n +1 )$ where
$\alpha_i$ is odd for $i=1, \ldots, j-1$ and is even for $i= j+1, \ldots, n$.  The parity of $\alpha_j$ is unknown.
If $\alpha_j$ is odd, then
$v^t = \{(\alpha_1 +1 , \alpha_2 +1, \ldots, \alpha_{j-1} +1 , \alpha_j + 2, \alpha_{j+1}+1 , \ldots, \alpha_n +1 )$.
From this we immediately see that $v^t \setminus v^s = \{\alpha_j + 2\}$, so $|v^s\cap v^t| = n-1$.
Consider $v^t \setminus v^r$.  We claim that $v^t\setminus v^r = \{\alpha_n +1\}$ implying
$|v^r\cap v^t| = n-1$ so that the edges $\{v^r,v^t\}$ and $\{v^s,v^t\}$ exist in our graph by
claim 2 of Lemma~\ref{neighbors}.

It is enough to show that
$\alpha_j + 2 \in v^r$ as we know $\alpha_n +1 \notin v^r$ and the remaining elements of $v^t$ are in $v^r$ by the definitions
of $v^t$ and $v^r$.
Since, by assumption, $\alpha_j$ is odd, we know that there is a gap of size two between $\alpha_j$ and $\alpha_{j+1}$.  
Now, $\alpha_{j+1} -1 \in v^r$ by definition and is odd.
As $\alpha_j$ is odd, it is also the case that $\alpha_j +2$ is odd.  
Moreover, there is only one odd number between $\alpha_j$ and $\alpha_{j+1}$.  
Thus $\alpha_j +2 = \alpha_{j+1} -1$.
The case when $\alpha_j$ is even follows similarly.

Now consider a simplex $\sigma$ in our constructed complex.  
If $\sigma$ consists of only tight vertices, then it is immediate from Case 1 that $\tau \in \mNt$, as we constructed it to be so.
If $\tau$ consists of any loose vertices then the fact that it is also in $\mNt$ is immediate from Lemma~\ref{neighbors} or Case 3 above.


\subsection{Proof that $\mNt$ is a simplicial polytope}\label{polytopeproof}

Let $G_n$ be the $1$-skeleton of $\mNt$.
By definition, $G_n$ is simple; the planarity of $G_n$ is shown by the our construction.
To apply Theorem~\ref{Steinitz} and complete our proof, we must show that $G_n$ is $3$-connected.

Let $x$ and $y$ be any two vertices of $G_n$.
We will show that there exist (at least) three disjoint paths from $x$ to $y$.
The above construction shows us that $G_n$ is built from $n+1$ concentric $(n+1)$-cycles, labeled from inside out $P_0, \ldots P_n$.
Recall that each vertex $v$ on a given cycle $P_i$, with the exception of the two cycles formed by tight vertices, is connected to two pairs of adjacent vertices off $P_i$, one pair on each of $P_{i-1}$ and $P_{i+1}$.
Each vertex $v$ on either $P_0$ or $P_n$ is connected to only one vertex on an adjacent cycle, either $P_1$ or $P_{n-1}$, respectively.

Suppose first that $x$ and $y$ lie on the same cycle $P_i$.
Traverse $P_i$ from $x$ to $y$ in opposite directions to obtain two edge-independent paths.
The third path can be found by first moving from $x$ to an adjacent cycle, $P_{i+1}$ or $P_{i-1}$, then traveling around this cycle in either direction until a neighbor of $y$ is reached.

Next, suppose $x$ and $y$ lie on different cycles, say $x$ on $P_j$ and $y$ on $P_k$ with $j<k$; we begin by finding a pair of disjoint paths from $x$ to $y$.
We first construct a pair of disjoint paths from $x$ to $P_k$.
Let $v^{1}$ and $w^{1}$ be the neighbors of $x$ that lie on $P_{j+1}$.
If $j+1=k$, stop at this point having constructed paths $x,v^1$ and $x,w^1$, otherwise proceed.
Let $r^2$ and $v^2$ be the neighbors of $v^{1}$ on $P_{j+2}$ and $v^2$ and $w^2$ be the neighbors of $w^1$ on $P_{j+2}$, noting that $v^2$ is a common neighbor of $v^1$ and $w^1$.
If $j+2=k$, stop at this point having constructed paths $x,v^1,v^2$ and $x,w^1,w^2$, otherwise proceed.

Now we are in the same situation with $v^2$ and $w^2$ as we were in with $v^1$ and $w^1$, in that we may denote the neighbors of $v^2$ on $P_{j+4}$ by $r^3$ and $v^3$ and the neighbors of $w^3$ by $v^3$ and $w^3$, which allows us to construct paths $x,v^1,v^2,v^3$ and $x,w^1,w^2,w^3$ from $x$ to $P_{j+3}$.
If $y$ is not on $P_{j+3}$, then as in the previous cases, we may extend these two paths by setting $v^4$ equal to the unique common neighbor of $v^3$ and $w^3$ on $P_{j+4}$ and setting $w^4$ equal to the other neighbor of $w^3$ on $P_{j+4}$.
We continue in this fashion, creating two paths that curve side-by-side through the graph $G_n$, until we reach $P_k$ with paths $x,v^1,\ldots,v^{k-j}$ and  $x,w^1,\ldots,w^{k-j}$.
Note that $v^{k-j}$ and $w^{k-j}$ are neighbors on $P_k$ by construction.
If $v^{k-j}$ and $w^{k-j}$ are both on $P_k$ and neither is $y$, then we may extend these two paths along $P_k$ in opposite directions until we meet $y$.
If either $v^{k-j}$ or $w^{k-j}$ is $y$, then we may complete the other path by connecting via one edge.

Having completed two disjoint paths from $x$ to $y$, we now need to find a third path disjoint from the first two.
If $j+1=k$, let $z$ be a neighbor of $y$ on $P_j$; we may create a third path by considering the path in $P_j$ from $x$ to $z$ followed by the edge from $z$ to $y$.
If $k>j+1$, then let $z^0$ be a neighbor of $x$ on $P_j$ not connected by a diagonal.
There exists a common neighbor $t$ of $z^0$ and $x$ on $P_1$; let $z^1$ be the other neighbor of $z^0$ on $P_1$.
We may choose $z^2$ to be the common neighbor of $z^1$ and $v^1$ on $P_2$.
Continue in this fashion, choosing $z^m$ to be the common neighbor of $z^{m-1}$ and $v^{m-1}$ on $P_m$, until one reaches $z^{k-1}$ on $P_{k-1}$.
If neither $v^{k-j}$ nor $w^{k-j}$ are equal to $y$, choose a neighbor $s$ of $y$ on $P_{k-1}$ such that $s$ is not $v^{k-1}$ or $w^{k-1}$.
Extend the path $x,z^1,z^2,\ldots,z^{k-1}$ to $s$ by traversing $P_{k-1}$, then connect to $y$.
If one of $v^{k-j}$ or $w^{k-j}$ is equal to $y$, then extend $z^{k-1}$ to $z^k$ on $P_k$ and connect $z^k$ to $y$ on $P_k$ to complete the path.
Our result is a third path that is disjoint from the first two, connecting $x$ to $y$.
Thus, $G_n$ is $3$-connected and planar, hence the one-skeleton of a $3$-dimensional polytope.


\section{Invariant subcomplexes}\label{Sphere2}

In \cite{BraunAutStableKneser}, the first author proved that for $k\geq 1$ and $n\geq 1$ the automorphism group of $SG_{n,k}$ is isomorphic to the dihedral group of order $2(2n+k)$, which we denote $D_{2n+k}$.
This action arises naturally, as $D_{2n+k}$ acts on $[2n+k]$ thought of as a regular $(2n+k)$-gon with vertices labeled cyclically; this action preserves stable $n$-sets and disjointness, hence induces an action on $SG_{n,k}$.
It is clear from the example in Figure~\ref{p} that this action does not restrict to simplicial automorphisms of $\mNt$, because the vertices $\{1,3,5\}$ and $\{3,5,7\}$ are in the same $D_{2n+2}$-orbit but do not have simplicially isomorphic neighborhoods.
In general, the vertices $\{1,3,5,\ldots,2n-1\}$ and $\{3,5,7,\ldots,2n+1\}$ share this behavior.
It is interesting to search for a polytopal boundary sphere contained in $\mN(SG_{n,k})$ that is invariant under this group action.
In the case $k=2$, we can find such a sphere after passing to a partial subdivision.


\subsection{Subdividing and collapsing $\mN(SG_{n,2})$}\label{subdivide}

We subdivide $\mN(SG_{n,2})$ into a complex we call $\mNh$ by leaving the facets of loose vertices unchanged and subdividing only the facets of tight vertices.
We shall consider the case where $\alpha$ is a tight vertex consisting of even elements.
For any such $\Sigma_\alpha$, $n+1$ of its vertices are the even, tight vertices and the remaining vertex is a loose vertex consisting of $n-1$ even elements and one odd element.
Order the tight even sets in $\mN(SG_{n,2})$ lexicographically, denoted by $\alpha^1, \ldots , \alpha^{n+1}$, and label the loose set in $\Sigma_{\alpha^i}$ by $\eta_{\alpha^i}$.

For the facet $\Sigma_{\alpha^i}$, using the notation of Subsection~\ref{collapse}, we have distinguished vertices $v^j$ and $v^{j+1}$.
Note that $(v^j \cap v^{j+1}) \subset \eta_{\alpha^i}$.
Recall that since each of these facets $\Sigma_{\alpha^i}$ contain all the odd, tight vertices, they intersect in a common $n$-dimensional face which we will denote by $F_o$.
Barycentrically subdivide $F_o$, and subdivide $\Sigma_{\alpha^i}$ by coning over the subdivision of $F_o$ with $\eta_{\alpha^i}$.
To form $\mNh$, apply this subdivision and an identical procedure to the odd tight vertices; denote by $F_e$ the $n$-dimensional face given by the even, tight vertices.

The complex we collapse onto will arise as a subcomplex of $\mNh$, which we denote $M(SG_{n,2})$.  
We will first produce a simplicial collapsing on $\mN(SG_{n,2})$ that preserves $F_o$ and $F_e$, then subdivide the $F$'s and some adjacent cells, and finally complete the collapsing on this subdivided complex.
Our strategy is very similar to the one used to create $\mNt$, and consists of the following steps:
\begin{enumerate}
\item In facets of loose vertices we collapse on the free edge formed by the outer neighbors of the vertex.
\item In each $\Sigma_{\alpha^i}$, we collapse the faces of $\Sigma_{\alpha^i}$ containing $\eta_{\alpha^i}$, except for the triangle $\{v^j,v^{j+1},\eta_{\alpha^i}\}$.
\item On the $F$'s, we barycentrically subdivide and then collapse all faces except the triangles $\{\{v^i,v^{i+1}\},v^i,b\}$ and $\{\{v^i,v^{i+1}\},v^{i+1},b\}$, where $b$ is the barycenter of $F$ and the $v^i$'s are the same notation introduced in Subsection~\ref{collapse}.  We also subdivide the triangles $\{v^j,v^{j+1},\eta_{\alpha^i}\}$ by subdividing the edge $\{v^j,v^{j+1}\}$.
\end{enumerate}
Via these collapses, the facets of loose simplices will collapse to our previous pairs of triangles sharing an edge, while the union of the subdivided $\Sigma_{\alpha^i}$'s will deformation retract to complexes given as a barycentrically subdivided polygon with a triangle glued to each boundary edge.

For the first two steps of our process, we use the poset map $\Phi$ from Subsection~\ref{collapse}, and apply the matchings used there on the preimages $\Phi^{-1}(B_\alpha)$ and $\Phi^{-1}(C_\alpha)$ ranging over all stable $n$-sets $\alpha$.
The resulting matching induces a simplicial collapse onto a subcomplex of $\mN(SG_{n,2})$ consisting of a pair of triangles for each loose vertex, the simplices $F_o$ and $F_e$, and a triangle of the form $\{v^j,v^{j+1},\eta_{\alpha^i}\}$ for each tight set $\alpha^i$.

For the third step in our process, we will subdivide and collapse $F_o$ and $F_e$, along with the $\{v^j,v^{j+1},\eta_{\alpha^i}\}$ triangles.
We illustrate this only for $F_o$; $F_e$ is handled identically.
Label the odd, tight stable $n$-sets  $v^1, \ldots, v^{n+1}$ as before.
Apply this labeling to $F_o$; barycentrically subdivide $F_o$, relabeling the remaining vertices in the standard way except we use the label of $b$ for the barycenter.
To ensure that our subdivision remains a simplicial complex, we must also subdivide each $\{v^j,v^{j+1},\eta_{\alpha^i}\}$  into two triangles, $\{\{v^j,v^{j+1}\},v^j,\eta_{\alpha^i}\}$ and $\{\{v^j,v^{j+1}\},v^{j+1},\eta_{\alpha^i}\}$.

Let $\Psi$ be the poset map from the face poset of $F_o$ to the $2$-chain $Q:=0<1$ such that
\[
\Psi(x) = \left\{ \begin{array}{ll}
0 & \mbox{ if } x\subseteq\{\{v^m,v^{m+1}\}, m, b\}  \mbox{ or } x\subseteq\{\{v^m,v^{m+1}\},v^{m+1}, b\} \\
1_o & \mbox{ if } \mbox{ otherwise }\\
\end{array}
\right.
\]
where $m\in [n+1]$ and $m+1 = 1$ if $m=n+1$.
For $x\in \Psi^{-1}(1)$, we match $x$ with $x \cup b$ if $b \notin x$.
This matching is clearly acyclic, and it is perfect since if $w\in \Psi^{-1}(0)$ and $b\notin w$, then $b\cup w$ is contained in $\Psi^{-1}(0)$.

It is straightforward to paste the $\Psi$-maps for $F_o$ and $F_e$ together into a single poset map into the poset consisting of two $2$-element chains sharing a common minimal element, i.e. $\{0,1_o,1_e\}$ such that $0<1_o$ and $0<1_e$.
If a face of our collapsed, then subdivided, complex from the first two steps is not mapped by $\Psi$ for $F_o$ or $F_e$, then map it to $0$.
The resulting poset map allows the application of Theorem~\ref{patchwork}.

The proof that $M(SG_{n,2})$ is the boundary of a $3$-dimensional polytope is almost identical to the proof in Subsection~\ref{polytopeproof}.
One only needs to observe that the complex resulting from the current analysis is obtained from our previous case by removing the edges inside $P_0$ and $P_n$, barycentrically subdividing $P_0$ and $P_n$, and then subdividing the remaining triangles sharing an edge with $P_0$ or $P_n$.
The proof that the one-skeleton is $3$-connected is the same aside from handling the situation where vertices arising from the subdivision are involved, which is a straightforward modification of the argument given in the previous case.


\subsection{Action of $D_{2n+2}$ on $M(SG_{n,2})$}

Our goal in this subsection is to show that $D_{2n+2}$ acts simplicially on $M(SG_{n,2})$.
Consider $[2n+2]$ as the set of vertices of a regular $(2n+2)$-gon on which $D_{2n+2}$ acts in the usual way.


Let $\alpha = \{\alpha_1 , \ldots, \alpha_n\}$ be a loose stable set with $\alphaodd = \{\alpha_1, \ldots, \alpha_i\}$ and
$\alphaeven = \{\alpha_{i+1}, \ldots, \alpha_n\}$
Let the immediate neighbors of $\alpha$ be denoted $i(1):=\alpha\oplus1$ and $i(2):=\alpha\ominus1$.
Let the outer neighbors of $\alpha$ be denoted
\begin{eqnarray*}
o(1) &:=& \{\alpha_1 + 1 , \ldots , \alpha_{i-1} +1 , \alpha_i + 2, \alpha_{i+1} +1 , \ldots , \alpha_n +1\}  \\
o(2) &:=& \{\alpha_1 +1 , \ldots , \alpha_{n-1} +1 , \alpha_n + 2\}.
\end{eqnarray*}
There are two simplices associated to $\alpha$ in $M(SG_{n,2})$, given by $\{i(1),i(2),o(j)\}$ for $j=1,2$.
As $D_{2n+2}$ is the automorphism group of $SG_{n,2}$, the neighbors of $\alpha$ are mapped by an element $g\in D_{2n+2}$ to neighbors of $g(\alpha)$.
Since $\alpha$ is a loose set and $D_{2n+2}$ clearly preserves the loose and tight conditions, $g(\alpha)$ is also a loose set.
We will show that the outer neighbors of $\alpha$ are carried by $g$ to the outer neighbors of $g(\alpha)$, hence each of these two simplices associated to $\alpha$ are taken to one of the simplices associated to $g(\alpha)$.

For an element $g\in D_{2n+2}$, $g$ is either a rotation or a flip of $[2n+2]$.
If $g$ is a rotation, then
\begin{eqnarray*}
g(o(1)) &=& \{g(\alpha_1 + 1) , \ldots , g(\alpha_{i-1} +1) , g(\alpha_i + 2), g(\alpha_{i+1} +1) , \ldots , g(\alpha_n +1)\}, \\
		&=& \{g(\alpha_1) + 1 , \ldots , g(\alpha_{i-1}) +1 , g(\alpha_i) + 2, g(\alpha_{i+1}) +1 , \ldots , g(\alpha_n) +1\}.
\end{eqnarray*}
Otherwise, $g$ is a flip and
\begin{eqnarray*}
g(o(1)) &=& \{g(\alpha_1 + 1) , \ldots , g(\alpha_{i-1} + 1) , g(\alpha_i + 2), g(\alpha_{i+1} +1) , \ldots , g(\alpha_n + 1)\} \\
		&=& \{g(\alpha_1) - 1 , \ldots , g(\alpha_{i-1}) -1 , g(\alpha_i) - 2, g(\alpha_{i+1}) -1 , \ldots , g(\alpha_n) -1\} \\
		&=& \{g(\alpha_1) + 1 , \ldots , g(\alpha_{i-1}) +1 , g(\alpha_i) +1 , g(\alpha_{i+1}) +1 , \ldots , g(\alpha_n) +2\}
\end{eqnarray*}
In either case, $g(o(1))$ is an outer neighbor of $\alpha$ and by a similar argument, $g(o(2))$ is an outer neighbor of $\alpha$.
Thus $D_{2n+2}$ sends the associated simplices in $M(SG_{n,2})$ of a loose set $\alpha$ to the associated simplices of $g(\alpha)$.


Let $\alpha$ be a tight stable set and consider the four triangles $T_1:=\{v^j, \{v^j,v^{j+1}\}, b\}$, $T_2:=\{v^{j+1}, \{v^j,v^{j+1}\}, b\}$, $T_3:=\{v^j, \{v^j,v^{j+1}\}, \eta_{\alpha}\}$, and $T_4:=\{v^{j+1}, \{v^j,v^{j+1}\}, b\}$ in $M(SG_{n,2})$ where $v^{j+1}$ is a neighboring tight vertex, $\{v^j,v^{j+1}\}$ is the barycenter of the edge $v^j v^{j+1}$, $\eta_{\alpha}$ is the unique vertex that is both a stable set and a neighbor of $v^j$ and $v^{j+1}$ in $M(SG_{n,2})$, and $b$ is the barycenter of the $(n+1)$-gon.
Every remaining facet in $M(SG_{n,2})$ can be associated to a tight stable set $\alpha$ in this way, e.g. every remaining facet contains some tight stable set as a vertex.
We now apply $g$ to these triangles and show that their image is contained in $M(SG_{n,2})$.

For $T_1$, we have $g(T_1) = \{ \{g(v^j), g(\{v^j,v^{j+1}\}), g(b)\}$.
As $g$ is either parity preserving or reversing for all elements of $[2n+2]$, we know that $g(b)$ is either $b$ or the corresponding element of opposite parity.
In either case, $g(v^j)$ and $g(b)$ are neighbors in our complex as well as $g(\{v^j,v^{j+1}\})$ and $g(b)$.
Finally, $g(\{v^j, v^{j+1}\})$ and $g(v^j)$ are neighbors in our complex as $g(\{v^j, v^{j+1}\})= \{g(v^j), g(v^{j+1})\}$.  So $T_1$ (as well as $T_2$ by symmetry) maps to a corresponding triangle in our complex for any $g\in D_{2n+2}$.

To see that $T_3$ and $T_4$ map to appropriate triangles, we need only check that $g(\eta_{\alpha})$ is a neighbor of $g(v^j)$ and $g(v^{j+1})$ in our complex.
By definition and construction the set $\eta_{\alpha} = \{v^j\}\cap \{v^{j+1}\} \cup \{p\}$ where $p$ is the unique element of opposite parity of the elements of $v^j$ and $v^{j+1}$ that allows $\eta_{\alpha}$ to remain stable.
Then $g(\eta_{\alpha}) = g(v^j) \cap g(v^{j+1}) \cup g(p)$.
As $g(v^j)$ and $g(v^{j+1})$ are connected via $g(\{v^j,v^{j+1}\})$, we know that they have exactly $n-1$ elements in common.
Moreover, $g(p)$ is of opposite parity of the elements of $g(v^j)$.
Hence, $g(\eta_{\alpha})$ is stable and a neighbor of both $g(v^j)$ and $g(v^{j+1})$ in $M(SG_{n,2})$, thus $D_{2n+2}$ preserves triangles of this form as well.


\bibliographystyle{plain}
\bibliography{Braun}

\end{document}